\documentclass{amsart}

\usepackage{amsmath}
\usepackage{amsthm}
\usepackage{amssymb}

\allowdisplaybreaks

\numberwithin{equation}{section}

\newcommand{\Z}{\mathbb{Z}}
\newcommand{\N}{\mathbb{N}}
\newcommand{\R}{\mathbb{R}}
\newcommand{\F}{\mathcal{F}}
\newcommand{\Sh}{\mathcal{S}}

\newcommand{\supp}{\mathop{\mathrm{supp}}}

\newcommand{\la}{\langle}
\newcommand{\ra}{\rangle}
\newcommand{\K}{\boldsymbol{k}}
\newcommand{\Nu}{\boldsymbol{\nu}}

\theoremstyle{plain}
\newtheorem{thm}{Theorem}[section]
\newtheorem{prop}[thm]{Proposition}
\newtheorem{lem}[thm]{Lemma}

\newtheorem*{thmA}{Theorem A}

\theoremstyle{definition}

\newtheorem{rem}[thm]{Remark}

\begin{document}
\title[Bilinear pseudo-differential operators of $S_{0,0}$-type]
{Kato-Ponce type inequality for bilinear pseudo-differential operators 
of $S_{0, 0}$-type in the scale of Besov spaces}

\author[N. Shida]{Naoto Shida}

\date{}

\address{Department of Mathematics, 
Graduate School of Science, Osaka University, 
Toyonaka, Osaka 560-0043, Japan}

\email[N. Shida]{u331453f@ecs.osaka-u.ac.jp}

\keywords{Besov spaces, Bilinear H\"ormander symbol classes,
Bilinear pseudo-differential operators, Kato-Ponce type inequality}

\subjclass[2020]{35S05, 42B15, 42B35}

\begin{abstract}
We consider the Kato-Ponce type inequality for bilinear pseudo-differential operators with $S_{0, 0}$-type 
symbols in the scale of Besov spaces. In particular, the borderline whether the boundedness of those operators 
holds or not is discussed.
\end{abstract}

\maketitle

\section{Introduction}
In this paper,  the following $S_{0, 0}$-type symbol classes are considered.
One is the bilinear H\"ormander class $BS^m_{0, 0}$, $m \in \R$, 
consisting of all $\sigma(x, \xi_1, \xi_2) \in C^\infty((\R^n)^3)$ such that
\begin{align*}
|\partial_x^\alpha \partial_{\xi_1}^{\beta_1} \partial_{\xi_2}^{\beta_2} \sigma(x, \xi_1, \xi_2)|
\le
C_{\alpha, \beta_1, \beta_2}(1+|\xi_1| +|\xi_2|)^m
\end{align*}
for all multi-indices $\alpha, \beta_1, \beta_2 \in \N^n_0 = \{0, 1, 2, \dots \}^n$. 
The other is a symbol class $BS^{(m_1, m_2)}_{0, 0}$, $m_1, m_2 \in \R$, 
consisting of all $\sigma(x, \xi_1, \xi_2) \in C^\infty((\R^n)^3)$ such that
\begin{align*}
|\partial_x^\alpha \partial_{\xi_1}^{\beta_1} \partial_{\xi_2}^{\beta_2} \sigma(x, \xi_1, \xi_2)|
\le
C_{\alpha, \beta_1, \beta_2}(1+|\xi_1|)^{m_1} (1+|\xi_2|)^{m_2}.
\end{align*}
For a symbol $\sigma$, the bilinear pseudo-differential operator $T_\sigma$ is defined by
\begin{align*}
T_\sigma (f_1, f_2)(x) 
=
\frac{1}{(2\pi)^{2n}} 
\int_{(\R^n)^2} 
e^{i x\cdot (\xi_1 +\xi_2)} \sigma(x, \xi_1, \xi_2) \widehat{f_1}(\xi_1) \widehat{f_2}(\xi_2)\ 
d\xi_1d\xi_2
\end{align*}
for $f_1, f_2 \in \Sh(\R^n)$.

The celebrated Calder\'on-Vaillancourt theorem \cite{CV} states that 
the linear pseudo-differential operators  with symbols $\sigma(x, \xi) \in C^\infty((\R^n)^2)$ satisfying
\begin{equation}\label{S000}
|\partial^\alpha_x \partial^\beta_\xi \sigma(x, \xi)|
\le
C_{\alpha, \beta}
\end{equation}
are bounded on $L^2$.
In contrast to this fact, bilinear pseudo-differential operators with symbols in $BS^0_{0, 0}$ are not always 
bounded  from $L^2 \times L^2$ to $L^1$. This interesting fact was first pointed out by B\'enyi-Torres \cite{BT-2}.
Then, Miyachi-Tomita \cite{MT-IUMJ} proved that 
all bilinear pseudo-differential operators with symbols in $BS^{-n/2}_{0, 0}$ are bounded 
from $L^2 \times L^2$ to $L^1$. By duality and interpolation, this implies that 
those operators are bounded from $L^{p_1} \times L^{p_2}$ to $L^p$ 
for $1 \le p \le 2 \le p_1, p_2 \le \infty$ with $1/p =1/p_1+1/p_2$. 
They also proved that 
the number $m=-n/2$ is the critical order in the sense that these boundedness do not hold if $m > -n/2$.
For the preceding results in the subcritical case $m< -n/2$, 
see B\'enyi-Bernicot-Maldonado-Naibo-Torres \cite{BBMNT} and Michalowski-Rule-Staubach \cite{MRS}.

Recently, Kato-Miyachi-Tomita \cite{KMT, KMT-2} proved that all bilinear pseudo-differential operators 
with symbols in $BS^{(m_1, m_2)}_{0, 0}$, $m_1, m_2 < 0$, $m_1+m_2 = -n/2$, 
are bounded from $L^{p_1} \times L^{p_2}$ to $L^p$ for $1 \le p \le 2 \le p_1, p_2 \le \infty$ 
and $1/p \le 1/p_1+1/p_2$ by using $L^2$-based amalgam spaces. 
Since
$BS^{-n/2}_{0, 0} \subset BS^{(m_1, m_2)}_{0, 0}$ for $\ m_1, m_2 \le 0\ \text{and}\ m_1+m_2 = -n/2$, 
this improves the result of \cite{MT} in terms of  symbol classes. 
They also pointed out that the condition $1/p =1/p_1+ 1/p_2$ is not always necessary for $S_{0, 0}$-type symbols. 
Quite recently, Hamada-Shida-Tomita \cite{HST} proved that 
all $T_\sigma$ with $\sigma \in BS^{(m_1, m_2)}_{0, 0}$, $m_1, m_2 < 0$, $m_1+m_2 = -n/2$, 
are bounded from $L^2 \times L^2$ to the Besov spaces $B^0_{p, q}$, $1\le p \le 2$, $q=1$, 
and showed the optimality of the ranges of  $p$ and $q$.

 Next, we recall the Kato-Ponce inequality: 
\begin{equation*} 
\|(I-\Delta)^{s/2}(f_1 f_2)\|_{L^p}
\lesssim 
\|(I-\Delta)^{s/2} f_1\|_{L^{p_1}}
\|f_2\|_{L^{p_2}}
+
\|f_1\|_{L^{\widetilde{p}_1}}
\|(I-\Delta)^{s/2} f_2\|_{L^{\widetilde{p}_2}},
\end{equation*} 
where $1 < p_1, p_2, \widetilde{p}_1, \widetilde{p}_2 \le \infty$ and $1 < p < \infty$ satisfy 
$1/p = 1/p_1 + 1/p_2 = 1/\widetilde{p}_1 +1/\widetilde{p}_2$ and $s > 0$ 
(see, e.g., Kato-Ponce \cite{KT}, Muscalu-Schlag \cite{MS} and Grafakos-Oh \cite{GO}). 
The following Kato-Ponce type inequality also have been studied in many researches 
(see, e.g., B\'enyi-Torres \cite{BT-1}, Koezuka-Tomita \cite{KT} and Naibo-Thomson \cite{NT}):
\begin{equation*}
\|(I-\Delta)^{s/2}T_\sigma(f_1, f_2)\|_{X}
\lesssim 
\|(I-\Delta)^{s/2} f_1\|_{X_1}
\|f_2\|_{X_2}
+
\|f_1\|_{\widetilde{X}_1}
\|(I-\Delta)^{s/2} f_2\|_{\widetilde{X}_2},
\end{equation*}
where $X_i, \widetilde{X}_i$, $i= 1, 2$, are function spaces on $\R^n$. 
Since $T_\sigma (f_1, f_2) = f_1 f_2$ if $\sigma \equiv 1$, 
this can be regarded as a generalization of the Kato-Ponce inequality.
Under the restriction of exponents, 
the result of \cite{NT} reads as follows.

\begin{thmA}[{\cite[Theorem 1.1]{NT}}]
Let $1 \le p \le 2 \le p_1, p_2, \widetilde{p}_1, \widetilde{p}_2 \le \infty$ 
be such that 
$1/p = 1/p_1 + 1/p_2= 1/\widetilde{p}_1 + 1/\widetilde{p}_2$, 
$0< q \le \infty$ and $\sigma \in BS^{-n/2}_{0, 0}$. If $s> 0$, then 
\begin{align} \label{NT-est}
\|T_\sigma(f_1, f_2)\|_{B^s_{p, q}}
 \lesssim
 \|f_1\|_{B^{s}_{p_1, q}}
 \|f_2\|_{L^{p_2}}
 +
  \|f_1\|_{L^{\widetilde{p}_1}}
 \|f_2\|_{B^{s}_{\widetilde{p}_2, q}}
\end{align}
for all $f_1, f_2 \in \Sh(\R^n)$.
\end{thmA}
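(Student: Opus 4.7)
My strategy is a Littlewood--Paley paraproduct decomposition of $T_\sigma$, combined with the $L^{p_1}\times L^{p_2}\to L^p$ boundedness of $T_\sigma$ for $\sigma\in BS^{-n/2}_{0,0}$ (Miyachi--Tomita, cited above). Let $\{\Delta_j\}_{j\ge 0}$ be a standard dyadic Littlewood--Paley decomposition and $S_j=\sum_{k\le j}\Delta_k$. I would split
$$T_\sigma(f_1,f_2)=\mathrm{I}+\mathrm{II}+\mathrm{III},\qquad \mathrm{I}=\sum_\ell T_\sigma(\Delta_\ell f_1,S_{\ell-C}f_2),\quad \mathrm{II}=\sum_\ell T_\sigma(S_{\ell-C}f_1,\Delta_\ell f_2),$$
where $\mathrm{III}$ is the ``high--high'' remainder over $|\ell-m|\le C$ for a suitable constant $C$, and aim to bound $\|\mathrm{I}\|_{B^s_{p,q}}$ and $\|\mathrm{III}\|_{B^s_{p,q}}$ by $\|f_1\|_{B^s_{p_1,q}}\|f_2\|_{L^{p_2}}$ and $\|\mathrm{II}\|_{B^s_{p,q}}$ by $\|f_1\|_{L^{\widetilde{p}_1}}\|f_2\|_{B^s_{\widetilde{p}_2,q}}$.

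For $\mathrm{I}$, the key estimate $\|\Delta_j T_\sigma(\Delta_\ell f_1,S_{\ell-C}f_2)\|_{L^p}\lesssim \|\Delta_\ell f_1\|_{L^{p_1}}\|f_2\|_{L^{p_2}}$ follows from the Miyachi--Tomita bound together with $L^p$-boundedness of $\Delta_j$ and $S_{\ell-C}$. Assuming an almost-orthogonality statement restricting the $\ell$-sum to $\ell\sim j$, this yields $2^{js}\|\Delta_j\mathrm{I}\|_{L^p}\lesssim 2^{js}\|\Delta_j f_1\|_{L^{p_1}}\|f_2\|_{L^{p_2}}$, and taking the $\ell^q_j$ norm produces $\|f_1\|_{B^s_{p_1,q}}\|f_2\|_{L^{p_2}}$. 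Term $\mathrm{II}$ is handled symmetrically, with the exponents $\widetilde{p}_1,\widetilde{p}_2$ in place of $p_1,p_2$. For $\mathrm{III}$, the effective output frequency of each summand is $\lesssim 2^\ell$, so $\Delta_j\mathrm{III}$ receives contributions from all $\ell\ge j-C$; a bound of the form
$$2^{js}\|\Delta_j\mathrm{III}\|_{L^p}\lesssim \sum_{\ell\ge j-C}2^{(j-\ell)s}\cdot 2^{\ell s}\|\Delta_\ell f_1\|_{L^{p_1}}\|\widetilde{\Delta}_\ell f_2\|_{L^{p_2}},$$
together with $s>0$ and the discrete Young inequality, then delivers $\|\mathrm{III}\|_{B^s_{p,q}}\lesssim \|f_1\|_{B^s_{p_1,q}}\|f_2\|_{L^{p_2}}$.

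The main obstacle is that, unlike the classical Kato--Ponce setting where ordinary products enjoy exact spectral localization, $T_\sigma$ with $\sigma\in BS^{-n/2}_{0,0}$ does \emph{not} preserve frequency supports: the bounded but non-decaying $x$-dependence of $\sigma$ spreads output frequencies. Consequently, $\Delta_j T_\sigma(\Delta_\ell f_1,\Delta_m f_2)$ does not vanish when $j\gg\max(\ell,m)$, so the paraproduct bookkeeping above rests on a quantitative almost-orthogonality lemma of the form
$$\|\Delta_j T_\sigma(\Delta_\ell f_1,\Delta_m f_2)\|_{L^p}\lesssim 2^{-N(j-\max(\ell,m))_+}\|\Delta_\ell f_1\|_{L^{p_1}}\|\Delta_m f_2\|_{L^{p_2}}$$
valid for arbitrarily large $N$. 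Deriving this---via integration by parts in the oscillatory integral defining $T_\sigma$, exploiting the uniform bounds on $\partial_x^\alpha\sigma$, combined with a dyadic rescaling that reduces each frequency block to a $BS^0_{0,0}$-type symbol amenable to the Calder\'on--Vaillancourt technique---is the main technical hurdle, and requires careful tracking of constants across the indices $p_1,p_2,p$ in the range $1\le p\le 2\le p_1,p_2\le\infty$.
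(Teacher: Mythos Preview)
Theorem A is a result \emph{cited} from Naibo--Thomson \cite{NT}; the present paper does not give its own proof of this statement but rather proves the strictly stronger Theorem \ref{main1}, which improves Theorem A in two ways (it allows $s>-n/2$ rather than $s>0$, and replaces $L^{p_i}$ by the larger $B^0_{p_i,\infty}$). So the relevant comparison is between your outline and the proof of Theorem \ref{main1} in Section \ref{section4}.

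Your paraproduct-plus-black-box strategy is viable for Theorem A as stated (with $s>0$), and is close in spirit to what \cite{NT} does. However, two points in your sketch need repair. First, the almost-orthogonality lemma you write down, with decay only in $(j-\max(\ell,m))_+$, is \emph{one-sided} and does not justify ``restricting the $\ell$-sum to $\ell\sim j$'' in terms $\mathrm{I}$ and $\mathrm{II}$: when $j\ll\ell$ in the high--low piece you also need decay, and this comes from the fact that $|\xi_1+\xi_2|\sim 2^\ell$ forces the $x$-frequency of $\sigma$ to be $\sim 2^\ell$. The correct statement is two-sided, $\|\Delta_j T_\sigma(\Delta_\ell f_1,S_{\ell-C}f_2)\|_{L^p}\lesssim 2^{-N|j-\ell|}\|\Delta_\ell f_1\|_{L^{p_1}}\|f_2\|_{L^{p_2}}$. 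Second, your proposed derivation of the lemma via ``dyadic rescaling to a $BS^0_{0,0}$ symbol amenable to Calder\'on--Vaillancourt'' is off: $BS^0_{0,0}$ bilinear operators are \emph{not} bounded $L^{p_1}\times L^{p_2}\to L^p$, and Calder\'on--Vaillancourt is a linear statement. The clean route is to decompose $\sigma=\sum_{j'}\psi_{j'}(D_x)\sigma$; each piece lies in $BS^{-n/2}_{0,0}$ with seminorms $\lesssim 2^{-j'N}$ (by the uniform $\partial_x^\alpha$-bounds), so Miyachi--Tomita applies directly to each piece, and the support condition \eqref{supp-FT} localizes the output spectrum.

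The paper's own argument in Section \ref{section4} is genuinely different and more refined: instead of using Miyachi--Tomita as a black box, it decomposes $\sigma$ simultaneously in the $x$-frequency $j$, the input frequencies $\K=(k_1,k_2)$, \emph{and} a unit-lattice decomposition $\Nu=(\nu_1,\nu_2)$, and then applies the trilinear Lemma \ref{lem-Lp1Lp2Lr-est}. The crucial gain is the factor $|\Lambda|^{1/2}$ there: in the high--high regime $A_{2,\ell}$ one chooses $\Lambda=\Lambda_{j,\ell}$ with $|\Lambda_{j,\ell}|\lesssim 2^{(j+\ell)n}$, which produces an extra $2^{-(k_1-\ell)n/2}$ in \eqref{A2-est}. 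This is exactly what buys the improvement from $s>0$ to $s>-n/2$; your black-box use of Miyachi--Tomita on $T_\sigma(\Delta_\ell f_1,\widetilde\Delta_\ell f_2)$ cannot see this gain, which is why your high--high estimate for $\mathrm{III}$ genuinely needs $s>0$.
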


The purpose of this paper is to prove the Kato-Ponce type inequality for bilinear pseudo-differential operators
 with $S_{0, 0}$-type symbols in the setting of Besov spaces. The main theorem is the following.

\begin{thm} \label{main1}
Let 
$1\le p \le 2 \le p_1, p_2, \widetilde{p}_1, \widetilde{p}_2 \le \infty$, 
$0 < q, q_1, q_2, \widetilde{q}_1, \widetilde{q}_2 \le \infty$ 
and $s, s_1, s_2, \widetilde{s}_1, \widetilde{s}_2 \in \R$ 
be such that
\begin{equation} \label{pqs-conditions}
\frac{1}{p} \le \frac{1}{p_1} + \frac{1}{p_2},\  \frac{1}{\widetilde{p}_1} + \frac{1}{\widetilde{p}_2},
\quad
\frac{1}{q} =\frac{1}{q_1}+\frac{1}{q_2} =\frac{1}{\widetilde{q}_1}+\frac{1}{\widetilde{q}_2},
\quad
s=s_1 +s_2= \widetilde{s}_1 +\widetilde{s}_2.
\end{equation}
\begin{enumerate}
\item
Suppose $\sigma \in BS^{-n/2}_{0, 0}$. If $\widetilde{s}_1$, $s_2 < n/2$ and $s > -n/2$, then it holds that
\begin{align} \label{main-est-1}
 \|T_\sigma(f_1, f_2)\|_{B^s_{p, q}}
 \lesssim
 \|f_1\|_{B^{s_1}_{p_1, q_1}}
 \|f_2\|_{B^{s_2}_{p_2, q_2}}
 +
 \|f_1\|_{B^{\widetilde{s}_1}_{\widetilde{p}_1, \widetilde{q}_1}}
 \|f_2\|_{B^{\widetilde{s}_2}_{\widetilde{p}_2, \widetilde{q}_2}}
\end{align}
for all $f_1, f_2 \in \Sh(\R^n)$.
\item
Suppose that  
$m_1, m_2 \in \R$, 
$m_1 +m_2 = -n/2$ and $\sigma \in BS^{(m_1, m_2)}_{0, 0}$. 
If $\widetilde{s_1}< m_1+n/2$, $s_2 < m_2 + n/2$ and $s > -n/2$, 
then \eqref{main-est-1} holds for all $f_1, f_2 \in \Sh(\R^n)$.
\end{enumerate}
\end{thm}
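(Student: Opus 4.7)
The plan is to combine a bilinear Littlewood--Paley decomposition of the symbol in the spirit of Bony with the dyadic bilinear $L^p$- and Besov-valued bounds of Miyachi--Tomita, Kato--Miyachi--Tomita, and Hamada--Shida--Tomita, and to read off the three threshold conditions as convergence conditions for the resulting dyadic sums.

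First I decompose $\sigma = \sigma^{I} + \sigma^{II} + \sigma^{III}$ along the regions $|\xi_1| \gg |\xi_2|$, $|\xi_1| \ll |\xi_2|$, and $|\xi_1| \sim |\xi_2|$; each piece inherits the symbol class of $\sigma$. I will bound $T_{\sigma^{I}}$ and $T_{\sigma^{III}}$ by the first term on the right-hand side of \eqref{main-est-1}, and $T_{\sigma^{II}}$ symmetrically by the second.

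For $T_{\sigma^I}$, since $|\xi_1 + \xi_2| \sim |\xi_1|$ on the support, one has (modulo rapidly decaying tails arising from the spatial dependence) $\Delta_k T_{\sigma^I}(f_1, f_2) \approx T_{\sigma^I_k}(\Delta_k f_1, S_{k-3} f_2)$, where $\sigma^I_k$ is the dyadic piece at $|\xi_1| \sim 2^k$. Applying to each piece the bilinear $L^p$-bound of Miyachi--Tomita in case (1), and of Kato--Miyachi--Tomita in case (2) (after a further dyadic decomposition in $\xi_2$ to absorb the $(1+|\xi_2|)^{m_2}$ factor), yields $\|T_{\sigma^I_k}(g_1, g_2)\|_{L^p} \lesssim 2^{-kn/2} \|g_1\|_{L^{p_1}} \|g_2\|_{L^{p_2}}$ in case (1), and the analogue $\lesssim 2^{k m_1} \cdot 2^{l m_2}$ on the sub-pieces in case (2). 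Multiplying by $2^{ks}$, splitting $2^{ks} = 2^{k s_1} \cdot 2^{k s_2}$, and applying H\"older in $\ell^q = \ell^{q_1} \cdot \ell^{q_2}$ then separates the $f_1$- and $f_2$-factors: the first recombines to $\|f_1\|_{B^{s_1}_{p_1, q_1}}$, while the second reduces to showing $\|(2^{k(s_2-n/2)} \|S_k f_2\|_{L^{p_2}})_k\|_{\ell^{q_2}} \lesssim \|f_2\|_{B^{s_2}_{p_2, q_2}}$. The latter follows from the triangle inequality together with a discrete Young-type convolution estimate and converges precisely when $s_2 < n/2$ (in case (2) the shifted threshold $s_2 < m_2 + n/2$ appears). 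A symmetric treatment of $T_{\sigma^{II}}$ produces the second term of \eqref{main-est-1} under the analogous constraint $\widetilde s_1 < n/2$ (resp.\ $\widetilde s_1 < m_1 + n/2$).

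The high-high piece $T_{\sigma^{III}}$ is the main obstacle: since the output frequency is no longer constrained by the inputs, one has $\Delta_k T_{\sigma^{III}}(f_1, f_2) \approx \sum_{j \geq k - C} \Delta_k T_{\sigma^{III}_j}(\Delta_j f_1, \Delta_j f_2)$, so each output frequency $k$ receives contributions from all higher input frequencies $j$. This is precisely the setting where the Besov-valued bilinear bound of Hamada--Shida--Tomita is required in place of the $L^p$-valued bounds of Miyachi--Tomita and Kato--Miyachi--Tomita: it supplies the uniform estimate $\|\Delta_k T_{\sigma^{III}_j}(g_1, g_2)\|_{L^p} \lesssim 2^{-j n/2} \|g_1\|_{L^{p_1}} \|g_2\|_{L^{p_2}}$ for all $k \leq j + C$. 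Splitting $2^{js} = 2^{j s_1} \cdot 2^{j s_2}$, applying H\"older in $\ell^q$ on the two Besov factors, and summing the resulting geometric series yields $\sum_{m \geq 0} 2^{-m(s + n/2)}$, which converges exactly when $s > -n/2$; the corresponding refinement for $BS^{(m_1, m_2)}_{0, 0}$ gives the same $s > -n/2$ threshold in case (2).
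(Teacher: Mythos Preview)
Your overall architecture---paraproduct trichotomy, identify $s_2<n/2$ from the high--low piece, $\widetilde s_1<n/2$ from the low--high piece, and $s>-n/2$ from the high--high piece---matches the paper's. The genuine gap is in the dyadic building block. You assert that Miyachi--Tomita gives
\[
\|T_{\sigma^I_k}(g_1,g_2)\|_{L^p}\lesssim 2^{-kn/2}\,\|g_1\|_{L^{p_1}}\|g_2\|_{L^{p_2}},
\]
and similarly that Hamada--Shida--Tomita supplies the high--high block estimate with a $2^{-jn/2}$ gain. Neither citation yields this. The symbol $\sigma^I_k$ lies in $BS^{-n/2}_{0,0}$ \emph{uniformly} in $k$, so the Miyachi--Tomita bound gives operator norm $\lesssim 1$, not $\lesssim 2^{-kn/2}$; extracting the factor $2^{-kn/2}$ would mean that $2^{kn/2}\sigma^I_k$, a generic $BS^{0}_{0,0}$ symbol supported on the $k$-th shell, defines a uniformly bounded operator $L^{p_1}\times L^{p_2}\to L^p$, and the B\'enyi--Torres counterexample shows this is false even for $L^2\times L^2\to L^1$. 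Without that factor the convolution estimate for the $f_2$-factor forces $s_2<0$ rather than $s_2<n/2$, so the argument does not reach the claimed threshold. (The high--high block has the same defect; incidentally the exponent you need there is $2^{(k-j)n/2}$, not $2^{-jn/2}$, to produce the series $\sum_{m\ge 0}2^{-m(s+n/2)}$ you write down.)

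What actually produces these gains in the paper is a finer mechanism than a black-box call to the $L^p$ theorems: one decomposes each dyadic shell further into unit cubes $\varphi(\xi_i-\nu_i)$, $\nu_i\in\Z^n$, controls the trilinear form by a Cauchy--Schwarz / square-function argument over the lattice (Lemma~\ref{lem-Lp1Lp2Lr-est}), and pays a factor $|\Lambda|^{1/2}$ where $\Lambda$ is the set of lattice points one must sum over. In the high--low region one sums over $\nu_2\in\Lambda_{k_2}$ with $|\Lambda_{k_2}|\approx 2^{k_2 n}$, which combined with the symbol size $2^{-k_1 n/2}$ gives $2^{-(k_1-k_2)n/2}$; in the high--high region one sums over output lattice points $\mu$ with $|\Lambda_{j,\ell}|\approx 2^{(j+\ell)n}$, producing $2^{-(k_1-\ell)n/2}$. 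These are exactly the decay rates your outer $\ell^q$ argument would need, but they come from this lattice almost-orthogonality lemma, not from the boundedness theorems you cite. Your ``modulo rapidly decaying tails'' clause also hides the genuine contribution of the $x$-frequency (the paper's $A_{1,\ell}$), though that is a secondary issue compared to the missing unit-scale $\ell^2$ input.
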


The following states the sharpness of the orders $\widetilde{s}_1, s_2$ and $s$.

\begin{thm} \label{main2}
Let $p, p_1, p_2, \widetilde{p}_1, 
\widetilde{p}_2$, $q, q_1, q_2, \widetilde{q}_1, \widetilde{q}_2$ and $s, s_1, s_2, \widetilde{s}_1, \widetilde{s}_2$ 
satisfy \eqref{pqs-conditions}.
\begin{enumerate}
\item
 Suppose $\sigma \in BS^{-n/2}_{0, 0}$. If \eqref{main-est-1} holds, then 
 $\min \{s_i, \widetilde{s}_i\} \le n/2$ for $i=1, 2$ and $s \ge -n/2$.
\item
Suppose that 
$m_1, m_2 \in \R$, 
$m_1 +m_2 = -n/2$ and $\sigma \in BS^{(m_1, m_2)}_{0, 0}$. If \eqref{main-est-1} holds, then 
$\min \{ s_i, \widetilde{s}_i\} \le m_i + n/2$ for $i=1, 2$
 and $s \ge -n/2$.
\end{enumerate}
\end{thm}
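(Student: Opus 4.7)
The plan is to establish each of the three necessary conditions by exhibiting, for each potential violation, a concrete symbol in the appropriate symbol class together with a one-parameter family of Schwartz pairs $(f_1^{(N)}, f_2^{(N)})$ indexed by $N \to \infty$ such that the hypothetical validity of \eqref{main-est-1} would entail an impossible scaling in $N$. The basic building blocks are modulated bumps: fixing a $\phi \in \Sh(\R^n)$ whose Fourier transform is smooth and compactly supported in a small ball about the origin, a routine Littlewood--Paley computation gives $\|e^{ix \cdot \xi}\phi\|_{B^s_{p,q}} \asymp (1+|\xi|)^s\|\phi\|_{L^p}$ with implicit constants independent of $\xi$. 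As the test symbols I take $\sigma_0(x, \xi_1, \xi_2) = (1+|\xi_1|^2 + |\xi_2|^2)^{-n/4}$ for part (1) and $\sigma_0(x, \xi_1, \xi_2) = (1+|\xi_1|^2)^{m_1/2}(1+|\xi_2|^2)^{m_2/2}$ for part (2); both are immediately verified to lie in the respective symbol classes.

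For the necessity of $s \ge -n/2$, I take opposite high-frequency modulations $f_1^{(N)}(x) = e^{iNe_1 \cdot x}\phi(x)$ and $f_2^{(N)}(x) = e^{-iNe_1 \cdot x}\phi(x)$, so that the integrand defining $T_{\sigma_0}(f_1^{(N)}, f_2^{(N)})$ concentrates at $(\xi_1, \xi_2) \approx (Ne_1, -Ne_1)$, where $\sigma_0 \asymp N^{-n/2}$ (using $m_1 + m_2 = -n/2$ in part (2)). Taylor expanding $\sigma_0$ around this point and exploiting the Schwartz decay of $\widehat{\phi}$ to control the off-center contributions then yields $\|T_{\sigma_0}(f_1^{(N)}, f_2^{(N)})\|_{B^s_{p,q}} \asymp N^{-n/2}$, while both summands on the right-hand side of \eqref{main-est-1} are $\asymp N^s$. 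Letting $N \to \infty$ forces $s \ge -n/2$.

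For the conditions on $\min\{s_i, \widetilde{s}_i\}$, by symmetry it suffices to treat $i = 1$. I take $f_1 \in \Sh$ fixed with $\widehat{f_1}$ supported near the origin (so its Besov norms are independent of $N$), and $f_2^{(N)}(x) = e^{iNe_1 \cdot x}\phi(x)$, so that $\|f_2^{(N)}\|_{B^{s_2}_{p_2, q_2}} \asymp N^{s_2}$ and $\|f_2^{(N)}\|_{B^{\widetilde{s}_2}_{\widetilde{p}_2, \widetilde{q}_2}} \asymp N^{\widetilde{s}_2}$. The integrand now localizes at $(\xi_1, \xi_2) \approx (0, Ne_1)$, where $\sigma_0 \asymp N^{-n/2}$ in part (1) and $\sigma_0 \asymp N^{m_2}$ in part (2); accordingly $T_{\sigma_0}(f_1, f_2^{(N)})$ is essentially a modulated bump at frequency $Ne_1$ of amplitude $N^{-n/2}$ (respectively $N^{m_2}$), giving $\|T_{\sigma_0}(f_1, f_2^{(N)})\|_{B^s_{p,q}} \asymp N^{s - n/2}$ (respectively $N^{s + m_2}$). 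Inequality \eqref{main-est-1} then produces
\[
N^{s - n/2} \lesssim N^{s_2} + N^{\widetilde{s}_2},
\]
whence $s - n/2 \le \max\{s_2, \widetilde{s}_2\}$, equivalently $\min\{s_1, \widetilde{s}_1\} = s - \max\{s_2, \widetilde{s}_2\} \le n/2$; in part (2) the analogous computation yields $\min\{s_1, \widetilde{s}_1\} \le -m_2 = m_1 + n/2$. Exchanging the roles of $f_1$ and $f_2$ handles $i = 2$.

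The only technical delicacy is verifying that the pointwise asymptotics of $T_{\sigma_0}(f_1^{(N)}, f_2^{(N)})$ persist after taking the Besov norm, not merely in $L^\infty$ on compact sets. This amounts to controlling the contribution of integration regions where $\xi_1$ or $\xi_2$ strays from the concentration point, which is achieved by repeated integration by parts using the $S_{0,0}$-type derivative bounds on $\sigma_0$ and the Schwartz decay of $\widehat{\phi}$; the resulting error terms acquire additional negative powers of $N$ and are negligible. Once this verification is in place, the scaling arguments above conclude the proof.
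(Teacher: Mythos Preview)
Your approach is correct and reaches the same conclusions, but it differs from the paper's in two respects. First, the paper does not work directly in Besov spaces: it passes to a Sobolev-space statement (Proposition~\ref{prop-est-Sobolev}) via the embeddings $L^p_{s+\epsilon}\hookrightarrow B^s_{p,q}\hookrightarrow L^p_{s-\epsilon}$, obtains the necessary conditions with an $\epsilon$-loss, and then sends $\epsilon\to0$. Second, rather than the smooth symbols $(1+|\xi_1|^2+|\xi_2|^2)^{-n/4}$ and $(1+|\xi_1|^2)^{m_1/2}(1+|\xi_2|^2)^{m_2/2}$, the paper uses dyadic-sum symbols such as $\sum_{k\ge10}2^{-kn/2}\varphi(2^{-k}\xi_1)\psi(2^{-k}\xi_2)$, engineered so that on the Fourier supports of the test functions exactly one summand survives and $T_\sigma(f_{1,j},f_{2,j})$ is computed \emph{exactly} (e.g.\ equals $2^{-jn/2}(\F^{-1}\phi)^2$), with no asymptotic analysis whatsoever. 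Your route is more direct (no $\epsilon$-reduction) but requires the convergence argument you sketch; the paper's is slightly more roundabout but sidesteps that analysis entirely.

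One remark on your final paragraph: since you take $\widehat\phi$ compactly supported near the origin, there are no ``off-center'' contributions at all---the integration is over a fixed compact set on which $N^{n/2}\sigma_0(\,\cdot+Ne_1,\,\cdot-Ne_1)$ converges uniformly---so neither a Taylor expansion nor integration by parts is actually needed. Note also that bare $S_{0,0}$ derivative bounds would \emph{not} by themselves produce extra negative powers of $N$; what makes your error control work is either the compact support of $\widehat\phi$ or the fact that your particular $\sigma_0$ happens to enjoy $S_{1,0}$-type decay on derivatives. This does not affect the validity of the argument, but the justification as written is slightly misdirected.
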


We shall give some comments on the relations 
between Theorems \ref{main1}, \ref{main2} and preceding results. 
First, there is a certain difference between the linear 
and bilinear cases. It is known that all linear pseudo-differential 
operators with symbols satisfying \eqref{S000} are bounded on 
the Besov space $B^s_{2, q}$ for all $ 0 < q \le \infty$ and $s \in \R$, 
where it should be emphasized that there is no restriction of $s$ 
(see Sugimoto \cite{Sugimoto}). 
On the other hand,
it follows from Theorem \ref{main1} (1) 
with $s_i = \widetilde{s}_i$, $i=1, 2$, 
that
if  $s_1, s_2 <n/2$, $s = s_1 +s_2 > -n/2$
and $\sigma \in BS^{-n/2}_{0, 0}$, then
\begin{equation} \label{Besovboundedness}
\|T_\sigma(f_1, f_2)\|_{B^s_{p, q}} 
\lesssim 
\|f_1\|_{B^{s_1}_{p_1, q_1}} \|f_2\|_{B^{s_2}_{p_2, q_2}}.
\end{equation}
In other words, all bilinear pseudo-differential operators 
with symbols in $BS^{-n/2}_{0, 0}$ are bounded from 
$B^{s_1}_{p_1, q_1} \times B^{s_2}_{p_2, q_2}$ to $B^{s}_{p, q}$ 
if $s_1,\ s_2< n/2$ and $s=s_1+s_2 > -n/2$. 
Theorem \ref{main2} implies that 
the boundedness does not hold 
if $s_1 > n/2$, $s_2 > n/2$ or $s_1 +s_2 < -n/2$.
Thus, we cannot remove the restrictions of $s_1, s_2$ and $s$ to have the boundedness 
in the bilinear case, and it is necessary to consider 
the Kato-Ponce type inequality without such restrictions.
Secondly, Theorem \ref{main1} improves some previous results. 
It follows from \eqref{Besovboundedness} that
all $T_\sigma$ with $\sigma \in BS^{-n/2}_{0, 0}$
are bounded from
$B^{0}_{p_1, p_1} \times B^{0}_{p_2, p_2}$ to $B^{0}_{p, p}$, 
$1 \le p \le 2 \le p_1, p_2 \le \infty$, $1/p \le 1/p_1 + 1/p_2$, 
and consequently Theorem \ref{main1} (1) improves the $L^{p_1} \times L^{p_2} \to L^p$ boundedness 
since $B^0_{p, p} \hookrightarrow L^p$ and 
$L^{p_i} \hookrightarrow B^0_{p_i, p_i}$, $i=1, 2$. 
By Theorem \ref{main1} (2), 
we also have the same boundedness for 
all $T_\sigma$ with $\sigma \in BS^{(m_1, m_2)}_{0, 0}$, $m_1, m_2 < 0$, $m_1 + m_2 = -n/2$.
In particular, 
since $B^0_{2, 2} = L^2$, Theorem \ref{main1} includes the result of \cite{HST}.
Moreover, if $ s> - n/2$ and $\sigma \in BS^{-n/2}_{0, 0}$, then we have 
\begin{equation*}
\|T_\sigma(f_1, f_2)\|_{B^s_{p, q}}
 \lesssim
\|f_1\|_{B^{s}_{p_1, q}}
 \|f_2\|_{B^0_{p_2, \infty}}
 +
 \|f_1\|_{B^0_{p_1, \infty}}
 \|f_2\|_{B^{s}_{p_2, q}}
\end{equation*} 
and Theorem \ref{main1} also improves Theorem A 
since $L^p \hookrightarrow B^0_{p, \infty}$.
It should be mentioned that this inequality holds not only $s > 0$ but also $-n/2 < s  \le 0$.
Finally, in contrast to the Kato-Ponce inequality, 
we can redistribute the fractional derivatives.
More precisely, 
for $s, s_i, \widetilde{s}_i \ge 0,\ i=1, 2$, 
with $s=s_1 +s_2 =\widetilde{s}_1 + \widetilde{s}_2$,
if 
\begin{multline} \label{Kato-Ponce-modoki}
\|(I-\Delta)^{s/2}(f_1 f_2)\|_{L^p}
\lesssim 
\|(I-\Delta)^{s_1/2} f_1\|_{L^{p_1}}
\|(I-\Delta)^{s_2/2}f_2\|_{L^{p_2}}
\\
+
\|(I-\Delta)^{\widetilde{s}_1/2}f_1\|_{L^{\widetilde{p}_1}}
\|(I-\Delta)^{\widetilde{s}_2/2} f_2\|_{L^{\widetilde{p}_2}},
\end{multline}
then 
$\min \{s_i, \widetilde{s}_i\} = 0$ 
for $i = 1, 2$ (see Remark \ref{rem-Sobolev}). 
Hence, if $s_1 \ge \widetilde{s}_1$ (automatically $s_2 \le \widetilde{s}_2$), 
then $s_1= \widetilde{s}_2 = s$ and $s_2 =\widetilde{s}_1 = 0$. 
On the other hand, in Theorem \ref{main1}, we have choices 
to redistribute the exponents $s_1, s_2, \widetilde{s}_1$ and $\widetilde{s}_2$.

The contents of this paper are as follows.
In Section \ref{section2},
we recall some preliminary facts and basic notations.
In Section \ref{section3},
we give some basic estimates and lemmas used in the proof of Theorem \ref{main1}.
In Sections \ref{section4} and \ref{section5},
we prove Theorem \ref{main1} and \ref{main2}, respectively.


\section{Preliminaries}\label{section2}
For two nonnegative quantities $A$ and $B$,
the notation $A \lesssim B$ means that
$A \le CB$ for some unspecified constant $C>0$,
and $A \approx B$ means that
$A \lesssim B$ and $B \lesssim A$.
We denote by $|\Lambda|$ 
the number of  elements of a finite subset $\Lambda$ of $\Z^n$. 
For $1 \le p \le \infty$,
$p'$ is the conjugate exponent of $p$,
that is, $1/p+1/p'=1$.
We denote the usual inner product of $f, g \in L^2(\R^n)$ 
by $\la f, g \ra$.

Let $\Sh(\R^n)$ and $\Sh'(\R^n)$ be the Schwartz space of
rapidly decreasing smooth functions on $\R^n$ and its dual,
the space of tempered distributions, respectively.
We define the Fourier transform $\F f$
and the inverse Fourier transform $\F^{-1}f$
of $f \in \Sh(\R^n)$ by
\[
\F f(\xi)
=\widehat{f}(\xi)
=\int_{\R^n}e^{-ix\cdot\xi} f(x)\, dx
\quad \text{and} \quad
\F^{-1}f(x)
=\frac{1}{(2\pi)^n}
\int_{\R^n}e^{ix\cdot \xi} f(\xi)\, d\xi.
\]
For $m \in L^{\infty}(\R^n)$,
the Fourier multiplier operator $m(D)$ is defined by
$m(D)f=\F^{-1}[m\widehat{f}]$ for $f \in \Sh(\R^n)$.

We recall the definition of Besov spaces $B^s_{p, q}(\R^n)$.
Let $\psi_0 \in \Sh(\R^n) $ be such that 
$\supp \psi_0 \subset \{\xi \in \R^n : |\xi| \le 2\}$ and $\psi_0(\xi) = 1$ on $\{\xi \in \R^n  :  |\xi| \le 1\}$.
Set $\psi(\xi) = \psi_0(\xi) - \psi_0(2\xi)$ and $\psi_\ell(\xi) = \psi(2^{-\ell} \xi)$, $\ell \ge 1$.
Note that $\supp \psi_\ell \subset \{\xi \in \R^n : 2^{\ell-1} \le |\xi| \le 2^{\ell+1}\}$, $\ell \ge 1$, 
and $\sum_{\ell \ge 0} \psi_\ell(\xi) = 1$ for all $\xi \in \R^n$. 
For $0< p, q \le \infty$ and $s \in \R$, the Besov space $B^s_{p, q}(\R^n)$ 
consists of all $f \in \Sh^\prime(\R^n)$ such that
\begin{align*}
\|f\|_{B^s_{p, q}}=
\bigg(  \sum_{\ell=0}^\infty 2^{\ell s q} \|\psi_\ell(D)f\|^q_{L^p} \bigg)^{1/q} < \infty
\end{align*}
with usual modification when $q=\infty$.
It is well known that the definition of Besov spaces
$B_{p,q}^s(\R^n)$ does not depend on the choice of $\{\psi_\ell\}_{\ell \ge 0}$.
See \cite{Triebel} for more details on Besov spaces.

For $1 \le p \le \infty$ and $s \in \R$, 
the $L^p$-based Sobolev space $L^p_s(\R^n)$ consists of all $f \in \Sh^\prime(\R^n)$ such that
\begin{equation*}
 \|f\|_{L^p_s} = \|(I -\Delta)^{s/2} f \|_{L^p} < \infty,
\end{equation*}
where $(I- \Delta)^{s/2}f = \F^{-1}[ (1+|\cdot|^2)^{s/2} \widehat{f}]$.
For 
$1 \le p \le \infty$, $ 0< q \le \infty$, $s \in \R$ and $\epsilon > 0$, 
the embedding 
$L^p_{s+\epsilon}(\R^n) \hookrightarrow B^s_{p, q}(\R^n) \hookrightarrow L^p_{s-\epsilon}(\R^n)$
hold. For details on these embeddings, see \cite[Proposition 2.3.2/2, Theorem 2.3.8 and Proposition  2.5.7]{Triebel}.

\section{Key lemmas}\label{section3}
In this section, we shall give key lemmas which will be used later. 
We define
\begin{align*}
  S_R(f)(x) = R^n\int_{\R^n} \frac{|f(y)|}{(1+R|x-y|)^{n+1}}\, dy, \quad R > 0,
\end{align*}
and  write $S_1(f)(x)$ as  $S(f)(x)$.
The following lemma with $p =\widetilde{p}$ can be found in Kato \cite[Lemma 4.2]{Kato}.

\begin{lem} \label{lem-dyadic-est}
 Let $2 \le p \le \widetilde{p} \le \infty$ and $\varphi \in \Sh(\R^n)$.  Then, we have
 \begin{align*}
  \left\|
  \bigg(\sum_{\nu \in \Z^n} |\varphi(R^{-1}(D -\nu))f|^2 \bigg)^{1/2} 
  \right\|_{L^{\widetilde{p}}}
  \lesssim R^{n(1/2 + 1/p -1/\widetilde{p})} \|f\|_{L^p}
 \end{align*}
 for all $R \ge 1$. 
\end{lem}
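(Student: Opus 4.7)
The plan is to invoke the known diagonal case $p = \widetilde{p}$, namely Kato's \cite[Lemma 4.2]{Kato}, which gives
\[
\Bigl\|\Bigl(\sum_{\nu \in \Z^n} |\varphi(R^{-1}(D-\nu))f|^2\Bigr)^{1/2}\Bigr\|_{L^p} \lesssim R^{n/2}\|f\|_{L^p} \qquad (2 \le p \le \infty),
\]
and then to upgrade from $L^p$ to $L^{\widetilde{p}}$ by a Bernstein-type argument, exploiting the fact that each piece $g_\nu := \varphi(R^{-1}(D-\nu))f$ is essentially frequency-localized in a ball of radius $\sim R$ centered at $\nu$.

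First, I would reduce to the case where $\varphi$ is compactly supported, say $\supp\varphi \subset B(0,A)$ for some $A>0$. Since $\varphi \in \Sh(\R^n)$, one can decompose $\varphi = \sum_{k \ge 0}\varphi_k$ with each $\varphi_k \in C_c^\infty(\R^n)$ supported in a dyadic annulus $\{|\xi| \approx 2^k\}$ (the piece $k=0$ in a ball), whose relevant Schwartz semi-norms decay faster than any polynomial in $2^k$. The bound for $\varphi$ then follows from summing the bounds obtained for the individual $\varphi_k$.

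Second, assuming $\supp\varphi \subset B(0,A)$, I would choose an auxiliary $\phi \in \Sh(\R^n)$ with $\widehat{\phi} \in C_c^\infty(\R^n)$ and $\widehat{\phi} \equiv 1$ on $B(0,A)$. Then $g_\nu$ is Fourier-supported in $B(\nu, AR)$, so $g_\nu = g_\nu * \Phi_\nu$ where $\Phi_\nu(x) = e^{i\nu\cdot x}R^n \phi(Rx)$. Taking absolute values pointwise and applying Minkowski's integral inequality in $\ell^2(\nu)$ gives
\[
G(x) := \Bigl(\sum_{\nu \in \Z^n} |g_\nu(x)|^2\Bigr)^{1/2} \le \int R^n|\phi(Ry)|\, G(x-y)\,dy,
\]
and then Young's convolution inequality yields $\|G\|_{L^{\widetilde{p}}} \lesssim R^{n(1/p - 1/\widetilde{p})}\|G\|_{L^p}$. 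Chaining this with Kato's estimate produces the claimed bound $R^{n(1/2 + 1/p - 1/\widetilde{p})}\|f\|_{L^p}$.

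The main obstacle will be the bookkeeping in the compact-support reduction: one has to verify that the Schwartz decay of the $\varphi_k$ beats the polynomial growth in $2^k$ coming both from the enlarged frequency support (a factor $(2^k R)^{n(1/p - 1/\widetilde{p})}$ in the Bernstein step) and from the dependence of the constant in Kato's lemma on finitely many Schwartz semi-norms of $\varphi_k$.
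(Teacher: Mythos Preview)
Your argument is correct, but the paper takes a shorter and somewhat different route. Rather than invoking Kato's diagonal estimate as a black box and then upgrading via a Bernstein step (which forces you into the compact-support reduction), the paper goes through the pointwise inequality behind Kato's lemma, namely
\[
\Bigl(\sum_{\nu \in \Z^n} |\varphi(R^{-1}(D-\nu))f(x)|^2\Bigr)^{1/2} \lesssim R^{n/2}\,S_R(|f|^2)(x)^{1/2},
\]
valid for any $\varphi \in \Sh(\R^n)$ (this is \cite[Lemma~3.1]{HST}). Taking $L^{\widetilde{p}}$ norms and writing $S_R(|f|^2) = \zeta_R * |f|^2$ with $\zeta_R(x) = R^n(1+R|x|)^{-n-1}$, a single application of Young's inequality with $1/r = 1 + 2/\widetilde{p} - 2/p$ yields $\|S_R(|f|^2)\|_{L^{\widetilde{p}/2}}^{1/2} \lesssim R^{n(1/p - 1/\widetilde{p})}\|f\|_{L^p}$, and the lemma follows. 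The advantage of the paper's approach is that the pointwise bound holds for arbitrary Schwartz $\varphi$, so there is no need for your dyadic decomposition of $\varphi$ and the associated bookkeeping; in effect, the Bernstein-type smoothing you perform at the level of the square function $G$ is already absorbed into the convolution structure of $S_R$. Your route has the minor pedagogical virtue of cleanly separating the $p=\widetilde{p}$ square-function estimate from the $L^p \to L^{\widetilde{p}}$ upgrade, but it is longer.
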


\begin{proof}
Since the pointwise estimate
 \begin{equation} \label{square-est}
 \bigg(\sum_{\nu \in \Z^n} |\varphi(R^{-1}(D -\nu))f(x)|^2 \bigg)^{1/2} 
 \lesssim
 R^{n/2}S_R(|f|^2)(x)^{1/2}
 \end{equation}
 holds for $R \ge 1$ (see \cite[Lemma 3.1]{HST}), we have
 \begin{equation*}
 \left\|
  \bigg(\sum_{\nu \in \Z^n} |\varphi(R^{-1}(D -\nu))f|^2 \bigg)^{1/2} 
  \right\|_{L^{\widetilde{p}}}
  \lesssim
  R^{n/2} \|S_R(|f|^2)^{1/2}\|_{L^{\widetilde{p}}}
  =
  R^{n/2} \|S_R(|f|^2)\|_{L^{\widetilde{p}/2}}^{1/2}.
 \end{equation*}
Set $1/r =  2/\widetilde{p} - 2/p + 1$.  Our assumption implies that $1 \le p/2 \le \widetilde{p}/2 \le \infty$ and
$1 \le r \le \infty$. Therefore, it follows from Young's inequality
\begin{align*}
\|S_R(|f|^2)\|_{L^{\widetilde{p}/2}}^{1/2}
=
\| \zeta_R * |f|^2\|_{L^{\widetilde{p}/2}}^{1/2}
\le
\|\zeta_R\|_{L^r}^{1/2} \left\| |f|^2 \right\|_{L^{p/2}}^{1/2}
\approx
R^{n( 1/p - 1/\widetilde{p})} \|f\|_{L^p},
\end{align*}
where $\zeta_R(x) = R^n (1+R|x|)^{-n-1}$. This completes the proof.
\end{proof}

Let $\sigma \in BS^m_{0, 0} \cup BS^{(m_1, m_2)}_{0, 0}$.
We shall give the decomposition of $\sigma$. 
We use  
$\{\psi_j\}_{j \ge 0}$, which is the same as in the definition of Besov spaces, and  also use
$\varphi \in \Sh(\R^n)$ satisfying 
$\supp \varphi \subset [-1, 1]^n$
and
$\sum_{\nu \in \Z^n} \varphi(\xi- \nu) = 1$, 
$\xi \in \R^n$.
Then, we decompose $\sigma$ as 
\begin{align*} 
\sigma(x, \xi_1, \xi_2) 
=
\sum_{j \in \N_0} \sum_{\substack{\K \in (\N_0)^2 \\ \K= (k_1, k_2)}}  
\sum_{\substack{\Nu \in (\Z^n)^2 \\ \Nu= (\nu_1, \nu_2)}} 
\sigma_{j, \K, \Nu} (x, \xi_1, \xi_2),
\end{align*}
where
\begin{align*}
\sigma_{j, \K, \Nu} (x, \xi_1, \xi_2) 
&=  [\psi_j(D_x)\sigma](x, \xi_1, \xi_2)\psi_{k_1}(\xi_1)\psi_{k_2}(\xi_2)\varphi(\xi_1-\nu_1) \varphi(\xi_2-\nu_2)
\end{align*}
and
\begin{align*}
[\psi_j(D_x)\sigma](x, \xi_1, \xi_2)
=
\int_{\R^n} \F^{-1}\psi_j(y) \sigma(x-y, \xi_1, \xi_2)\, dy.
\end{align*}
We remark that the Fourier transform of $T_{\sigma_{j, \K, \Nu}}(f_1, f_2)$ is given by
\begin{align} \label{FT-T}
\begin{split}
 &\F[T_{\sigma_{j, \K, \Nu}}(f_1, f_2)](\zeta)
 \\
 &= \frac{1}{(2\pi)^{2n}}\int_{(\R^n)^2}
   \psi_{j}(\zeta- (\xi_1+\xi_2)) [\F_x\sigma](\zeta-(\xi_1+\xi_2), \xi_1, \xi_2)\\
 &\qquad\qquad\qquad \times  \psi_{k_1}(\xi_1)\psi_{k_2}(\xi_2)
   \varphi(\xi_1-\nu_1)\varphi(\xi_2-\nu_2)
   \widehat{f_1}(\xi_1) \widehat{f_2}(\xi_2)\,d\xi_1d\xi_2,
\end{split}
\end{align}
where $\F_x\sigma$ denotes the partial Fourier transform
of $\sigma(x,\xi_1,\xi_2)$ with respect to  the $x$-variable.
If $|\zeta -\xi_1-\xi_2| \le 2^{j+1}$ and $\xi_i \in \nu_i + [-1, 1]^n$, $i =1, 2$, 
then $\zeta \in \nu_1+\nu_2 + [-2^{j+2}, 2^{j+2}]^n$.
From this, we see that
\begin{align} \label{supp-FT}
\supp \F[T_{\sigma_{j, \K, \Nu}}(f_1, f_2)] \subset \nu_1+\nu_2 + [-2^{j+2}, 2^{j+2}]^n.
\end{align}

\begin{lem}[{\cite[Lemma 3.2]{HST}}] \label{lem-symb-est}
Let $m, m_1, m_2 \in \R$ and $N \ge 0$.
\begin{enumerate}
\item
 If $\sigma \in BS^{m}_{0, 0}$, then 
\begin{equation*}
|T_{\sigma_{j, \K,\Nu}}(f_1, f_2)(x) | \lesssim 2^{\max \{k_1, k_2 \}m -jN}  S(f_1)(x) S(f_2)(x)
\end{equation*}
 for all $j \in \N_0$, $\K = (k_1, k_2) \in (\N_0)^2$ and $\Nu \in (\Z^n)^2$.

\item  If $\sigma \in BS^{(m_1, m_2)}_{0, 0}$, then 
\begin{equation*}
|T_{\sigma_{j, \K,\Nu}}(f_1, f_2)(x) | \lesssim 2^{k_1m_1 +k_2m_2 -jN}  S(f_1)(x) S(f_2)(x)
\end{equation*}
for all $j \in \N_0$, $\K = (k_1, k_2) \in (\N_0)^2$ and $\Nu \in (\Z^n)^2$.
\end{enumerate}
\end{lem}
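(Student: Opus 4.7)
My plan is to deduce the pointwise bound from a uniform kernel estimate. Writing
\begin{align*}
T_{\sigma_{j, \K, \Nu}}(f_1, f_2)(x) = \int_{(\R^n)^2} K_{j, \K, \Nu}(x, y_1, y_2)\, f_1(y_1)\, f_2(y_2)\, dy_1\, dy_2,
\end{align*}
with
\begin{align*}
K_{j, \K, \Nu}(x, y_1, y_2) = \frac{1}{(2\pi)^{2n}} \int_{(\R^n)^2} e^{i(x-y_1)\cdot \xi_1 + i(x-y_2)\cdot \xi_2}\, \sigma_{j, \K, \Nu}(x, \xi_1, \xi_2)\, d\xi_1\, d\xi_2,
\end{align*}
the claim reduces to
\begin{align*}
|K_{j, \K, \Nu}(x, y_1, y_2)| \lesssim A_{j, \K}\, (1+|x-y_1|)^{-(n+1)} (1+|x-y_2|)^{-(n+1)},
\end{align*}
with $A_{j, \K} = 2^{\max\{k_1, k_2\}m - jN}$ in case (1) and $A_{j, \K} = 2^{k_1 m_1 + k_2 m_2 - jN}$ in case (2). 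Integrating this against $|f_1(y_1)||f_2(y_2)|$ and applying the definition of $S$ immediately yield the stated bound.

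The main task is the auxiliary symbol estimate
\begin{align*}
|\partial_x^\alpha \partial_{\xi_1}^{\beta_1} \partial_{\xi_2}^{\beta_2} [\psi_j(D_x)\sigma](x, \xi_1, \xi_2)| \lesssim 2^{-jN}\, (1+|\xi_1|+|\xi_2|)^m
\end{align*}
for every $N \ge 0$ (with the obvious analogue in case (2)). The case $j = 0$ follows directly from the symbol inequality for $BS^m_{0,0}$ applied to $\psi_0(D_x)\sigma$. For $j \ge 1$, the decisive point is that $\psi$ vanishes near the origin, so $\int y^\gamma\, \F^{-1}\psi_j(y)\, dy = 0$ for every multi-index $\gamma$. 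I would subtract from $\sigma(x-y, \xi_1, \xi_2)$ its order-$N$ Taylor polynomial in $y$ inside the convolution with $\F^{-1}\psi_j$, bound the remainder by $|y|^N (1+|\xi_1|+|\xi_2|)^m$ using the symbol estimate $|\partial_x^N \sigma| \lesssim (1+|\xi_1|+|\xi_2|)^m$, and exploit the scaling $\F^{-1}\psi_j(y) = 2^{jn} \F^{-1}\psi(2^j y)$ to get $\int |y|^N |\F^{-1}\psi_j(y)|\, dy \lesssim 2^{-jN}$, producing the gain. Multiplying by the compactly supported cutoffs $\psi_{k_i}(\xi_i)\varphi(\xi_i - \nu_i)$, whose derivatives are $O(1)$ and whose joint $\xi_i$-support is a unit cube near $\nu_i$, transfers the estimate to $\sigma_{j, \K, \Nu}$; since $|\xi_i| \approx 2^{k_i}$ on that support, $(1+|\xi_1|+|\xi_2|)^m$ is replaced by $2^{\max\{k_1, k_2\}m}$ in case (1) and $(1+|\xi_1|)^{m_1}(1+|\xi_2|)^{m_2}$ by $2^{k_1 m_1 + k_2 m_2}$ in case (2).

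The kernel bound then follows by standard integration by parts in $\xi_1$ and $\xi_2$ via
\begin{align*}
(1 - \Delta_{\xi_i})\, e^{i(x-y_i)\cdot \xi_i} = (1+|x-y_i|^2)\, e^{i(x-y_i)\cdot \xi_i},
\end{align*}
iterated $M$ times in each variable with $2M \ge n+1$; no boundary terms appear because $\varphi(\xi_i - \nu_i)$ confines the $\xi_i$-integrand to a set of unit measure. The main technical obstacle is extracting the $2^{-jN}$ gain in the symbol estimate above; the rest is routine symbolic-calculus bookkeeping.
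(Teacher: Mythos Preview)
Your proposal is correct and follows essentially the same route as the paper: the remark after the lemma obtains the symbol bound $|\partial_{\xi_1}^{\beta_1}\partial_{\xi_2}^{\beta_2}\sigma_{j,\K,\Nu}| \lesssim 2^{\max\{k_1,k_2\}m - jN}$ by combining $1+|\xi_1|+|\xi_2|\approx 2^{\max\{k_1,k_2\}}$ on the support with the kernel decay $|\F^{-1}\psi_j(y)|\lesssim 2^{jn}(1+2^j|y|)^{-N-n-1}$, which is precisely the ingredient your Taylor-remainder argument uses, and then defers to \cite{HST} for the passage to the $S(f_1)S(f_2)$ bound via the integration-by-parts kernel estimate you outline.
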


\begin{rem}
The assertion (1) is not given in \cite{HST}, 
but it follows from the same argument as in \cite[Lemma 3.2]{HST}. 
In fact, 
since 
$1+|\xi_1| +|\xi_2| \approx 2^{\max \{k_1, k_2\}}$ 
if $\xi_i \in \supp \psi_{k_i}$, $i= 1, 2$, and $|\F^{-1}\psi_j(x)| \lesssim 2^{jn} (1+ 2^j|x|)^{-N-n-1}$, we obtain
\begin{equation*}
|\partial_{\xi_1}^{\beta_1}\partial_{\xi_2}^{\beta_2} \sigma_{j, \K, \Nu}(x, \xi_1, \xi_2)| 
\lesssim 
2^{\max \{k_1, k_2\} m- jN}
\end{equation*}
instead of \cite[Lemma 3.2 (1)]{HST}, which gives the desired assertion.
\end{rem}

The following lemma plays an important role in the proof of Theorem \ref{main1}.
The proof is essentially the  same as in \cite[Lemma 3.3]{HST}, and  
the basic idea goes back to \cite[Lemma 3.6]{MT}. 

\begin{lem} \label{lem-Lp1Lp2Lr-est}
Let $2 \le r, p_1, p_2 \le \infty$, $1-1/p_1-1/p_2 \le 1/r \le 1/2$, and $N \ge 0$.
\begin{enumerate}
\item
If $\sigma \in BS^{m}_{0, 0}$, then the estimate
 \begin{align*} 
  &\Big(
  \sum_{\nu_1 \in \Lambda} \sum_{\nu_2 \in \Z^n} + 
  \sum_{\nu_1 \in \Z^n} \sum_{\nu_2 \in \Lambda} +
  \sum_{\mu \in \Lambda} \sum_{\nu_1+\nu_2 = \mu}
  \Big)
  |\la T_{\sigma_{j, \K, \Nu}}(f_1, f_2), g \ra|
  \\
 &\qquad\qquad\qquad\qquad\quad
 \lesssim 2^{\max \{k_1, k_2 \}m -jN}
 |\Lambda|^{1/2} \|f_1\|_{L^{p_1}}\|f_2\|_{L^{p_2}} \|g\|_{L^r}
 \end{align*}
 holds for all $j \in \N_0$, $\K = (k_1, k_2) \in (\N_0)^2$ and all finite sets $\Lambda \subset \Z^n$.
\item
If $\sigma \in BS^{(m_1, m_2)}_{0, 0}$, then the estimate
 \begin{align*} 
  &\Big(
  \sum_{\nu_1 \in \Lambda} \sum_{\nu_2 \in \Z^n} + 
  \sum_{\nu_1 \in \Z^n} \sum_{\nu_2 \in \Lambda} +
  \sum_{\mu \in \Lambda} \sum_{\nu_1+\nu_2 = \mu}
  \Big)
  |\la T_{\sigma_{j, \K, \Nu}}(f_1, f_2), g \ra|
 \\
 &\qquad\qquad\qquad\qquad\quad
 \lesssim 2^{k_1 m_1+k_2 m_2 -jN}
 |\Lambda|^{1/2} \|f_1\|_{L^{p_1}}\|f_2\|_{L^{p_2}} \|g\|_{L^r}
 \end{align*}
 holds for all $j \in \N_0$, $\K = (k_1, k_2) \in (\N_0)^2$ and all finite sets $\Lambda \subset \Z^n$.
 \end{enumerate}
\end{lem}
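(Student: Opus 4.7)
The plan is to handle all three sums by reducing each to a common trilinear integral of the form $|\Lambda|^{1/2}\int H^{(1)} H^{(2)} H^{(G)}\,dx$, with the factor $|\Lambda|^{1/2}$ extracted by a Cauchy--Schwarz applied to the index constrained to $\Lambda$. I describe the argument for case (1), first sum; case (2) just substitutes $2^{k_1 m_1+k_2 m_2}$ for $2^{\max\{k_1,k_2\}m}$ via Lemma \ref{lem-symb-est} (2), the second sum is symmetric by swapping $f_1,f_2$, and the third sum applies the Cauchy--Schwarz to $\sum_{\mu\in\Lambda}|G_\mu|$ in place of a $\nu$-sum.

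Begin with two reductions. Since $\sigma_{j,\K,\Nu}$ contains the cutoff $\psi_{k_i}(\xi_i)\varphi(\xi_i-\nu_i)$, we have $T_{\sigma_{j,\K,\Nu}}(f_1,f_2)=T_{\sigma_{j,\K,\Nu}}(F^1_{\nu_1},F^2_{\nu_2})$ with $F^i_{\nu_i}:=\varphi(D-\nu_i)\psi_{k_i}(D)f_i$. By \eqref{supp-FT} and Plancherel, $\la T_{\sigma_{j,\K,\Nu}}(f_1,f_2),g\ra=\la T_{\sigma_{j,\K,\Nu}}(F^1_{\nu_1},F^2_{\nu_2}),G_{\nu_1+\nu_2}\ra$, where $G_\mu:=\widetilde\Phi(D-\mu)g$ for a smooth bump $\widetilde\Phi$ equal to $1$ on $[-2^{j+2},2^{j+2}]^n$ and supported in $[-2^{j+3},2^{j+3}]^n$. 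Applying Lemma \ref{lem-symb-est} (1) with $F^1_{\nu_1},F^2_{\nu_2}$ in place of $f_1,f_2$ yields
\begin{align*}
|\la T_{\sigma_{j,\K,\Nu}}(f_1,f_2),g\ra|\lesssim 2^{\max\{k_1,k_2\}m-jN}\int S(F^1_{\nu_1})\,S(F^2_{\nu_2})\,|G_{\nu_1+\nu_2}|\,dx.
\end{align*}

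After substituting $\mu=\nu_1+\nu_2$, I apply Cauchy--Schwarz pointwise in $\mu$; since $\sum_\mu S(F^2_{\mu-\nu_1})^2=\sum_{\mu'}S(F^2_{\mu'})^2=:H^{(2)}(x)^2$ is translation-invariant in $\nu_1$, the $\mu$-sum separates cleanly as $H^{(2)}(x)\,H^{(G)}(x)$ with $H^{(G)}:=\bigl(\sum_\mu|G_\mu|^2\bigr)^{1/2}$. A further Cauchy--Schwarz on $\sum_{\nu_1\in\Lambda}S(F^1_{\nu_1})$ extracts $|\Lambda|^{1/2}$, producing
\begin{align*}
\sum_{\nu_1\in\Lambda,\,\nu_2\in\Z^n}|\la T_{\sigma_{j,\K,\Nu}}(f_1,f_2),g\ra|\lesssim 2^{\max\{k_1,k_2\}m-jN}\,|\Lambda|^{1/2}\int H^{(1)} H^{(2)} H^{(G)}\,dx
\end{align*}
with $H^{(i)}:=\bigl(\sum_{\nu\in\Z^n}S(F^i_\nu)^2\bigr)^{1/2}$.

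To finish, choose $\widetilde p_1\ge p_1$ and $\widetilde p_2\ge p_2$ with $1/\widetilde p_1+1/\widetilde p_2+1/r=1$; such a choice is possible precisely because $1/p_1+1/p_2+1/r\ge 1$ and $r'\le 2\le p_i$. Three-factor H\"older controls the trilinear integral by $\|H^{(1)}\|_{L^{\widetilde p_1}}\|H^{(2)}\|_{L^{\widetilde p_2}}\|H^{(G)}\|_{L^r}$. A pointwise Cauchy--Schwarz using the $L^1$-integrability of the defining kernel of $S$ yields $H^{(i)}(x)^2\lesssim\bigl(K*\sum_\nu|F^i_\nu|^2\bigr)(x)$ for a fixed $K\in L^1(\R^n)$, so that Lemma \ref{lem-dyadic-est} with $R=1$ gives $\|H^{(i)}\|_{L^{\widetilde p_i}}\lesssim\|f_i\|_{L^{p_i}}$. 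Lemma \ref{lem-dyadic-est} applied with $R\approx 2^j$ yields $\|H^{(G)}\|_{L^r}\lesssim 2^{jn/2}\|g\|_{L^r}$, and the harmless $2^{jn/2}$ is absorbed into $2^{-jN}$ by enlarging $N$. The principal obstacle is the sub-H\"older regime $1/p_1+1/p_2+1/r>1$, which forces the enlargement $p_i\to\widetilde p_i$; this is exactly what the range $\widetilde p\ge p$ in Lemma \ref{lem-dyadic-est} is designed to accommodate.
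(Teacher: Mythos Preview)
Your proof is correct and follows essentially the same strategy as the paper: localize $f_1,f_2,g$ in frequency, apply Lemma~\ref{lem-symb-est} pointwise, extract $|\Lambda|^{1/2}$ via Cauchy--Schwarz on the $\Lambda$-restricted index, and control the resulting trilinear square-function integral with H\"older and Lemma~\ref{lem-dyadic-est}. The one genuine point of variation is where you place the sub-H\"older slack: the paper applies H\"older with exponents $(p_1,p_2,\widetilde r)$, keeping $f_i$ in $L^{p_i}$ and using Lemma~\ref{lem-dyadic-est} to pass from $\|g\|_{L^r}$ to the enlarged $L^{\widetilde r}$-square function (at the cost of a power $2^{jn(1/2+1/r-1/\widetilde r)}$), whereas you apply H\"older with $(\widetilde p_1,\widetilde p_2,r)$, keeping $g$ in $L^r$ and using Lemma~\ref{lem-dyadic-est} at scale $R=1$ to enlarge each $L^{p_i}$ to $L^{\widetilde p_i}$; the $2^{jn/2}$ then comes purely from the $g$-square function at $R\approx 2^j$ with $p=\widetilde p=r$. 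Both routes exploit exactly the range $2\le p\le\widetilde p$ built into Lemma~\ref{lem-dyadic-est}, and the resulting $j$-loss is absorbed the same way by first invoking Lemma~\ref{lem-symb-est} with a larger exponent. Neither approach buys anything the other does not.
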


\begin{proof}
 Let $\widetilde{\varphi} \in \Sh(\R^n)$ be such that 
  $\supp \widetilde{\varphi} \subset [-2, 2]^n$ and $\widetilde{\varphi}(\xi) = 1$ on $ [-1, 1]^n$.  
 Set
\begin{equation*}
  f_{i, \nu_i} = \widetilde{\varphi}(D-\nu_i)f_i, 
  \quad
  g_{j, \mu} = \widetilde{\varphi}(2^{-(j+2)}(D-\mu))g
\end{equation*}
 for $\nu_i \in \Z^n$, $i=1,2$, $\mu \in \Z^n$ and $j \in \N_0$.
Since $\varphi \widetilde{\varphi} = \varphi$, we have 
$T_{\sigma_{j, \K, \Nu}}(f_1, f_2) = T_{\sigma_{j, \K, \Nu}}(f_{1, \nu_1}, f_{2, \nu_2})$ 
for $\Nu = (\nu_1, \nu_2) \in (\Z^n)^2$. Hence, by  \eqref{supp-FT}, 
 \begin{align} \label{rep-trilin-T}
  \la T_{\sigma_{j, \K, \Nu}}(f_1, f_2), g\ra 
  = 
  \la T_{\sigma_{j, \K, \Nu}}( f_{1, \nu_1}, f_{2, \nu_2}), g_{j, \nu_1 + \nu_2}\ra.
\end{align}
It follows from \eqref{rep-trilin-T} and 
Lemma \ref{lem-symb-est} (1) with $N$ replaced by $\widetilde{N} = N + n(1/r +1/p_1 +1/p_2 -1/2)$ that
\begin{align} \label{sum-T-est}
\begin{split}
 &\sum_{\nu_1} \sum_{\nu_2} |\la T_{\sigma_{j, \K, \Nu}}(f_1, f_2), g \ra|
 =
 \sum_{\nu_1} \sum_{\nu_2} |\la T_{\sigma_{j, \K, \Nu}}(f_{1, \nu_1}, f_{2, \nu_2}), g_{j, \nu_1+ \nu_2} \ra|
 \\
 &\lesssim
 2^{\max \{k_1, k_2\}m -j\widetilde{N}}  
 \int_{\R^n} \sum_{\nu_1} \sum_{\nu_2} S(f_{1, \nu_1})(x)S(f_{2, \nu_2})(x) |g_{j, \nu_1 +\nu_2}(x)| \ dx
 \end{split}
\end{align}
if $\sigma \in BS^m_{0, 0}$, and
 \eqref{sum-T-est} with $\max \{k_1, k_2\}m$ replaced by $k_1m_1+k_2m_2$ 
 follows from \eqref{rep-trilin-T} and Lemma \ref{lem-symb-est} (2) if $\sigma \in BS^{(m_1, m_2)}_{0, 0}$. 

\medskip
\noindent
{\it Estimates for the sums $\sum_{\nu_1 \in \Lambda} \sum_{\nu_2 \in \Z^n} 
and \sum_{\nu_1 \in \Z^n} \sum_{\nu_2 \in \Lambda}$}.
By symmetry, we only consider the estimate for the former sum.
By the Cauchy-Schwarz inequality, 
\begin{align*}
&\sum_{\nu_1 \in \Lambda} \sum_{\nu_2 \in \Z^n} S(f_{1, \nu_1})(x)S(f_{2, \nu_2})(x) |g_{j, \nu_1 +\nu_2}(x)|
\\
&\le
\sum_{\nu_1 \in \Lambda} S(f_{1, \nu_1})(x) 
\Big(\sum_{\nu_2 \in \Z^n} S(f_{2, \nu_2})(x)^2 \Big)^{1/2} 
\Big(\sum_{\nu_2 \in \Z^n}| g_{j, \nu_1 +\nu_2}(x)|^2 \Big)^{1/2} 
\\
&\le
|\Lambda|^{1/2}
\Big(\sum_{\nu_1 \in \Z^n} S(f_{1, \nu_1})(x)^2 \Big)^{1/2} 
\Big(\sum_{\nu_2 \in \Z^n} S(f_{2, \nu_2})(x)^2 \Big)^{1/2} 
\Big(\sum_{\mu \in \Z^n} |g_{j, \mu}(x)|^2 \Big)^{1/2}.
\end{align*}
By \eqref{square-est}, we have
\begin{align*}
\Big(\sum_{\nu_i \in \Z^n} S(f_{i, \nu_i})(x)^2 \Big)^{1/2} 
&\lesssim
\Big(\sum_{\nu_i \in \Z^n} S(|f_{i, \nu_i}|^2)(x) \Big)^{1/2} 
=
\bigg( S\Big( \sum_{\nu_i \in \Z^n}|f_{i, \nu_i}|^2\Big)(x) \bigg)^{1/2} 
\\
&\lesssim
S(S(|f_i|^2))(x)^{1/2} 
\approx
S(|f_i|^2)(x)^{1/2}
\end{align*}
for $i =1, 2$.
Thus, 
\begin{align*}
&\int_{\R^n} 
\sum_{\nu_1 \in \Lambda}\sum_{\nu_2 \in \Z^n} 
S(f_{1, \nu_1})(x)S(f_{2, \nu_2})(x) |g_{j, \nu_1 +\nu_2}(x)|\ 
dx
\\
&\lesssim
|\Lambda|^{1/2} 
\int_{\R^n} 
S(|f_1|^2)(x)^{1/2}S(|f_2|^2)(x)^{1/2} 
\Big(\sum_{\mu \in \Z^n} |g_{j, \mu}(x)|^2 \Big)^{1/2}
\ dx
\\
&\le
|\Lambda|^{1/2} 
\|S(|f_1|^2)^{1/2}\|_{L^{p_1}} \|S(|f_2|^2)^{1/2}\|_{L^{p_2}} 
\Big\|\Big(\sum_{\mu \in \Z^n} |g_{j, \mu}|^2 \Big)^{1/2} \Big\|_{L^{\widetilde{r}}},
\end{align*}
where we used H\"older's inequality with $1/p_1+1/p_2 +1/\widetilde{r} = 1$ in the last inequality. 
Since $2 \le p_i \le \infty$, $i=1, 2$, it follows from Young's inequality that
\begin{equation*}
\|S(|f_i|^2)^{1/2}\|_{L^{p_i}} = \|S(|f_i|^2)\|_{L^{p_i/2}}^{1/2} \lesssim \||f_i|^2\|_{L^{p_i/2}}^{1/2} = \|f_i\|_{L^{p_i}},
\quad
i =1, 2.
\end{equation*}
Moreover, our assumption implies that
$0 \le 1/ \widetilde{r} = 1 - 1/p_1 -1/p_2 \le 1/r \le 1/2$, 
namely, $2 \le r \le \widetilde{r} \le \infty$.
Hence, by Lemma \ref{lem-dyadic-est}, we have
\begin{equation*}
\Big\|\Big(\sum_{\mu \in \Z^n} |g_{j, \mu}|^2 \Big)^{1/2} \Big\|_{L^{\widetilde{r}}}
\lesssim
2^{jn(1/2+1/r-1/\widetilde{r})} \|g\|_{L^r} = 2^{j(\widetilde{N}- N)}\|g\|_{L^r}.
\end{equation*}
Combining these estimates, we get the desired estimate.

\medskip
\noindent
{\it Estimate for the sum $\sum_{\mu \in \Lambda} \sum_{\nu_1 +\nu_2 =\mu}$}. 
By the Cauchy-Schwarz inequality, we have
\begin{align*}
&\sum_{\mu \in \Lambda} 
\sum_{\nu_1+\nu_2 = \mu} 
S(f_{1, \nu_1})(x) S(f_{2, \nu_2})(x) |g_{j, \mu}(x)|
\\
&\le
\sum_{\mu \in \Lambda}  
\Big(\sum_{\nu_1 \in \Z^n} S(f_{1, \nu_1})(x)^2 \Big)^{1/2} 
\Big(\sum_{\nu_1 \in \Z^n} S(f_{2, \mu-\nu_1})(x)^2 \Big)^{1/2} 
|g_{j, \mu}(x)|
\\
&\le
|\Lambda|^{1/2}
\Big(\sum_{\nu_1 \in \Z^n} S(f_{1, \nu_1})(x)^2 \Big)^{1/2} 
\Big(\sum_{\nu_2 \in \Z^n} S(f_{2, \nu_2})(x)^2 \Big)^{1/2} 
\Big(\sum_{\mu \in \Z^n} |g_{j, \mu}(x)|^2 \Big)^{1/2}.
\end{align*}
The rest of the proof is the same as above. The proof  is complete.
\end{proof}

\section{Proof of Theorem \ref{main1}}\label{section4}
In this section, we shall prove Theorem \ref{main1}.
We assume that $p, q, s$ and other exponents are the same as in Theorem \ref{main1}, 
and that all $\psi_\ell$, $\ell \ge 0$, are real-valued. 

By duality, we see that
\begin{align*}
 \|T_{\sigma}(f_1, f_2)\|_{B^s_{p, q}}
&=
 \Big( \sum_{\ell \ge 0} 2^{\ell s q} \| \psi_\ell(D)T_{\sigma}(f_1, f_2) \|_{L^p}^q\Big)^{1/q}
 \\
&=
\bigg\{ 
\sum_{\ell \ge 0} 2^{\ell s q} 
\Big( 
\sup_{\|g\|_{L^{p^\prime}} = 1} | \la \psi_\ell(D)T_{\sigma}(f_1, f_2), g \ra |
\Big)^q 
\bigg \}^{1/q}
\\
&=
\bigg\{ 
\sum_{\ell \ge 0} 2^{\ell s q} 
\Big( \sup_{\|g\|_{L^{p^\prime}} = 1} | \la T_{\sigma}(f_1, f_2), g_\ell \ra |
\Big)^q 
\bigg\}^{1/q},
\end{align*}
where $g_\ell = \psi_\ell(D)g$. 
Next, we decompose $\sigma$ as follows:
\begin{align*}
 \sigma 
 = \sum_{j \ge 0} \sum_{\substack{\K =(k_1, k_2) \\ k_1, k_2 \ge 0}} \sum_{\Nu \in (\Z^n)^2} \sigma_{j, \K, \Nu}
 = \sum_{j \ge 0} \Big( \sum_{k_1 \ge k_2} + \sum_{k_1 < k_2} \Big) \sum_{\Nu \in (\Z^n)^2} \sigma_{j, \K, \Nu}.
\end{align*}
By symmetry, it suffices to consider the former sum. 
Thus, our goal is to show
\begin{multline} \label{goal}
 \bigg\{ 
 \sum_{\ell \ge 0} 2^{\ell s q} 
 \Big( \sup_{\|g\|_{L^{p^\prime}} = 1} 
 \Big| 
 \sum_{j \ge 0} \sum_{k_1 \ge k_2}\sum_{\Nu \in (\Z^n)^2}\la T_{\sigma_{j, \K, \Nu}}(f_1, f_2), g_\ell \ra 
 \Big|
 \Big)^q 
 \bigg\}^{1/q}
 \\
 \lesssim
  \|f_1\|_{B^{s_1}_{p_1, q_1}}
 \|f_2\|_{B^{s_2}_{p_2, q_2}}.
\end{multline}

Let $\widetilde{\psi}_0, \widetilde{\psi}  \in \Sh(\R^n)$ be such that 
$\supp \widetilde{\psi}_0 \subset \{ |\xi| \le 4 \}$, 
$\widetilde{\psi}_0(\xi) = 1$ on $\{|\xi| \le 2\}$, 
$\supp \widetilde{\psi} \subset \{1/4 \le |\xi| \le 4\}$ 
and 
$\widetilde{\psi}(\xi) = 1$ on $\{1/2 \le |\xi| \le 2\}$,
and set $\widetilde{\psi}_k(\xi) = \widetilde{\psi}(2^{-k}\xi)$ for $k \ge 1$. 
Then, the estimate
$\Big( \sum_{k \ge 0} 2^{k s q} \| \widetilde{\psi}_k(D)f \|_{L^p}^q\Big)^{1/q} \lesssim \|f\|_{B^s_{p, q}}$ 
holds
for $0 < p, q \le \infty$, $s \in \R$ and $f \in \Sh(\R^n)$
(see \cite[Proof of Proposition 2.3.2/1]{Triebel}).
Set $f_{i, k_i} = \widetilde{\psi}_{k_i}(D)f_i$, $i=1, 2$, and
then, the identity $\psi_{k_i} \widetilde{\psi}_{k_i}= \psi_{k_i}$ implies that
$
\la T_{\sigma_{j, \K, \Nu}}(f_1, f_2), g_\ell \ra
=
\la T_{\sigma_{j, \K, \Nu}}(f_{1, k_1}, f_{2, k_2}), g_\ell \ra.
$
We divide the sum into the following three parts:
\begin{align*}
&\sum_{j \ge 0}
\sum_{k_1 \ge k_2}
\sum_{\Nu \in (\Z^n)^2}
\la T_{\sigma_{j, \K, \Nu}}(f_1, f_2), g_\ell \ra
=
\sum_{j \ge 0}
\sum_{k_1 \ge k_2}
\sum_{\Nu \in (\Z^n)^2}
\la T_{\sigma_{j, \K, \Nu}}(f_{1, k_1}, f_{2, k_2}), g_\ell \ra
\\
&=
\Big( 
\sum_{\substack{j \ge k_1-3 \\ k_1 \ge k_2}} 
+
\sum_{\substack{j < k_1-3 \\ k_2 \le k_1 \le k_2 + 3}}
+
\sum_{\substack{j < k_1-3 \\  k_1 > k_2 + 3}}
\Big)
\sum_{\Nu \in (\Z^n)^2}\la T_{\sigma_{j, \K, \Nu}}(f_{1, k_1}, f_{2, k_2}), g_\ell \ra
\\
&= A_{1, \ell} + A_{2, \ell} +A_{3, \ell}.
\end{align*}

\begin{proof}[Proof of the assertion (1)]
We first give the proof of the assertion (1). 
We assume that $s_2 < n/2$, $s = s_1 +s_2 > -n/2$ and $\sigma \in BS^{-n/2}_{0, 0}$.

\medskip
\noindent
{\it Estimate for $A_{1, \ell}$}. 
By \eqref{FT-T}, we have
\begin{equation*}
\supp \F[T_{\sigma_{j, \K, \Nu}}(f_{1, k_1}, f_{2, k_2})] 
\subset 
\{|\zeta| \le 2^{j+6}\}, \quad j \ge k_1-3,\  k_1 \ge k_2.
\end{equation*}
Since $\supp \widehat{g_\ell} \subset \{2^{\ell-1} \le |\zeta| \le 2^{\ell+1}\}$, $\ell \ge 1$, we have 
$\la T_{\sigma_{j, \K, \Nu}}(f_{1, k_1}, f_{2, k_2}), g_\ell \ra = 0$ if $j \le \ell-7$. 
Moreover, if $\supp \varphi(\cdot -\nu_2) \cap \supp \psi_{k_2} = \emptyset$, 
then $\sigma_{j, \K, \Nu} = 0$ , and consequently $\la T_{\sigma_{j, \K, \Nu}}(f_{1, k_1}, f_{2, k_2}), g_\ell \ra = 0$.
From these observations, we have
\begin{equation*}
A_{1, \ell}
= 
\sum_{j \ge \ell-6}  \sum_{\substack{k_1 \le j+3 \\ k_2 \le k_1}} 
\sum_{\nu_1 \in \Z^n}\sum_{\nu_2 \in \Lambda_{k_2}}
\la T_{\sigma_{j, \K, \Nu}}(f_{1, k_1}, f_{2, k_2}), g_\ell \ra,
\end{equation*}
where 
\begin{align} \label{set-k2}
 \Lambda_{k_2} = \{\nu_2 \in \Z^n : \supp \varphi(\cdot -\nu_2) \cap \supp \psi_{k_2} \ne \emptyset\}.
\end{align}
Note that $|\Lambda_{k_2}| \lesssim 2^{k_2n}$. 
It follows from Lemma \ref{lem-Lp1Lp2Lr-est} (1) with $r = p^\prime$ and $N > \max\{ n/2, s+n/2\}$ that
\begin{align}\label{A1-est}
\begin{split}
 |A_{1, \ell}|
 &\lesssim
 \sum_{j \ge \ell-6}
 \sum_{\substack{k_1 \le j+3 \\ k_2 \le k_1}} 
 2^{-(k_1-k_2)n/2 -jN} \|f_{1, k_1}\|_{L^{p_1}} \|f_{2, k_2}\|_{L^{p_2}} \|g_\ell\|_{L^{p^\prime}}
 \\
 &\lesssim
 \bigg(
 \sum_{j \ge \ell-6}
 \sum_{\substack{k_1 \le j+3 \\ k_2 \le k_1}} 
 2^{-k_1(s_1 +n/2) +k_2(n/2 - s_2) -  jN}
 \bigg)
 \|f_1\|_{B^{s_1}_{p_1, q_1}} \|f_2\|_{B^{s_2}_{p_2, q_2}} \|g\|_{L^{p^\prime}},
 \end{split}
\end{align}
where we used the fact that
$\|f_{i, k_i}\|_{L^{p_i}} 
\le
2^{-k_is_i} \|f_i\|_{B^{s_i}_{p_i, q_i}} $
for 
$i =1, 2$
in the second inequality.
Since $s_2 < n/2$ and $s=s_1 +s_2> -n/2$, 
we have 
\begin{align*}
&\sum_{j \ge \ell-6}
\sum_{k_1 \le j+3} 
\sum_{k_2 \le k_1}
2^{-k_1(s_1 +n/2) +k_2(n/2 - s_2) -  jN}
\approx
\sum_{j \ge \ell-6} 2^{-jN} \sum_{k_1 \le j+3} 2^{-k_1s}
\\
&\le
\sum_{j \ge \ell-6}2^{-jN} \sum_{k_1 \le j+3} 2^{k_1n/2}
\approx
\sum_{j \ge \ell-6} 
 2^{-j(N-n/2)}  
\lesssim 
2^{-\ell(N -n/2)}.
\end{align*}
Therefore, 
the left hand side of  \eqref{goal} concerning $A_{1, \ell}$ is estimated by
\begin{align*}
\Big( \sum_{\ell \ge 0}2^{-\ell (N-s-n/2) q} \Big)^{1/q} \|f_1\|_{B^{s_1}_{p_1, q_1}} \|f_2\|_{B^{s_2}_{p_2, q_2}}
\approx
\|f_1\|_{B^{s_1}_{p_1, q_1}} \|f_2\|_{B^{s_2}_{p_2, q_2}}.
\end{align*}

\medskip
\noindent
{\it Estimate for $A_{2, \ell}$}.
If $j < k_1-3$ and $k_1 \ge k_2$, then
\begin{equation*}
\supp \F[T_{\sigma_{j, \K, \Nu}}(f_{1, k_1}, f_{2, k_2})] \subset \{|\zeta| \le 2^{k_1 +3}\},
\end{equation*}
and this implies that $\la T_{\sigma_{j, \K, \Nu}}(f_{1, k_1}, f_{2, k_2}), g_\ell \ra = 0$ for $k_1 \le\ell-4$.
Moreover, it follows from \eqref{supp-FT} and the fact $\supp \widehat{g}_\ell \subset \supp \psi_\ell$ that
if $(\nu_1+ \nu_2+ [-2^{j+2}, 2^{j+2}]^n) \cap \supp \psi_\ell = \emptyset $, 
then $\la T_{\sigma_{j, \K, \Nu}}(f_{1, k_1}, f_{2, k_2}), g_\ell \ra = 0$.
Therefore, $A_{2, \ell}$ is represented by
\begin{align*}
A_{2, \ell} 
= 
\sum_{\substack{k_1 \ge \ell-3 \\ k_1-3 \le k_2 \le k_1}}
\sum_{j < k_1-3} \sum_{\mu \in \Lambda_{j, \ell}} 
\sum_{\nu_1+\nu_2 =\mu}
\la T_{\sigma_{j, \K, \Nu}}(f_{1, k_1}, f_{2, k_2}), g_\ell \ra,
\end{align*} 
where 
$\Lambda_{j, \ell} = \{\mu \in \Z^n : (\mu + [-2^{j+2}, 2^{j+2}]^n) \cap \supp \psi_\ell \ne \emptyset\}$.
Since $|\Lambda_{j, \ell}| \lesssim 2^{(j+\ell)n}$, 
it follows from Lemma \ref{lem-Lp1Lp2Lr-est} (1) with $r = p^\prime$ and $N > n/2$ that
\begin{align} \label{A2-est}
 \begin{split}
 |A_{2, \ell}|
 &\lesssim
 \sum_{\substack{k_1 \ge \ell-3 \\ k_1-3 \le k_2 \le k_1}}
 \sum_{j < k_1-3}  
 2^{-(k_1-\ell)n/2 -j(N-n/2)} \|f_{1, k_1}\|_{L^{p_1}} \|f_{2, k_2}\|_{L^{p_2}} \|g_\ell\|_{L^{p^\prime}}
 \\
 &\lesssim
 \sum_{\substack{k_1 \ge \ell-3 \\ k_1-3 \le k_2 \le k_1}} 
 2^{-(k_1-\ell)n/2} \|f_{1, k_1}\|_{L^{p_1}} \|f_{2, k_2}\|_{L^{p_2}} \|g\|_{L^{p^\prime}}.
 \end{split}
\end{align}
By a change of variables, we can write the last quantity in \eqref{A2-est} as 
\begin{equation*}
\sum_{0 \le k \le 3} 
\sum_{k_1 \ge \ell -3} 
2^{-(k_1-\ell)n/2} \|f_{1, k_1}\|_{L^{p_1}} \|f_{2, k_1 - k}\|_{L^{p_2}} \|g\|_{L^{p^\prime}}.
\end{equation*} 
Hence, the left hand side of  \eqref{goal} concerning $A_{2, \ell}$ is estimated by
\begin{align} \label{A2l-est}
\begin{split}
\sum_{0 \le k \le 3} 
2^{ks_2}
\bigg\{ 
\sum_{\ell \ge 0} 
\Big( 
\sum_{k_1 \ge \ell -3} 
2^{(\ell - k_1) (s +n/2)}  2^{k_1s_1}\|f_{1, k_1}\|_{L^{p_1}} 2^{(k_1-k)s_2} \|f_{2, k_1-k}\|_{L^{p_2}} 
\Big)^q 
\bigg\}^{1/q}.
\end{split}
\end{align}
Hereafter, we only consider the case $k=0$, but our argument works for the other cases.
If $q \le 1$, then \eqref{A2l-est} with $k=0$ is estimated by
\begin{align*}
 &\bigg( 
 \sum_{\ell \ge 0} 
 \sum_{k_1 \ge \ell -3} 
 2^{(\ell - k_1) (s +n/2) q}  2^{k_1s_1 q}\|f_{1, k_1}\|^q_{L^{p_1}} 2^{k_1 s_2 q} \|f_{2, k_1}\|^q_{L^{p_2}} 
 \bigg)^{1/q}
 \\
 &\approx
 \bigg( \sum_{k_1 \ge0} 2^{k_1s_1 q} \|f_{1, k_1}\|_{L^{p_1}}^q 2^{k_1 s_2 q}\|f_{2, k_1}\|_{L^{p_2}}^q \bigg)^{1/q}
 \\
 &\le
 \Big(
 \sum_{k_1 \ge 0}
 2^{k_1s_1 q_1} \|f_{1, k_1}\|_{L^{p_1}}^{q_1}
 \Big)^{1/q_1}
 \Big(
 \sum_{k_1 \ge 0}
 2^{k_1s_2 q_2} \|f_{2, k_1}\|_{L^{p_2}}^{q_2}
 \Big)^{1/q_2}
 \lesssim
 \|f_1\|_{B^{s_1}_{p_1, q_1}} \|f_2\|_{B^{s_2}_{p_2, q_2}},
\end{align*}
where we used the assumption $s= s_1+s_2 > -n/2$ and H\"older's inequality. 
On the other hand, 
if $q > 1$, 
then it follows from Young's inequality that \eqref{A2l-est} with $k=0$ is estimated by
\begin{align*}
 &\bigg\{ 
  \sum_{\ell \ge 0}  
  \Big( 
  \sum_{k_1 \ge 0} 
  2^{-|\ell-k_1|(s+n/2)} 
  2^{k_1s_1}\|f_{1, k_1}\|_{L^{p_1}}
  2^{k_1s_2}\|f_{2, k_1}\|_{L^{p_2}} 
  \Big)^{q} 
 \bigg\}^{1/q} 
 \\
 &\le
 \Big(\sum_{\ell \ge 0} 2^{-|\ell|(s+n/2)}\Big) 
 \Big( \sum_{k_1 \ge 0}   2^{k_1s_1q}\|f_{1, k_1}\|_{L^{p_1}}^q 2^{k_1s_2q}\|f_{2, k_1}\|_{L^{p_2}}^q  \Big)^{1/q}
 \\
 &\lesssim
 \|f_1\|_{B^{s_1}_{p_1, q_1}}
 \|f_2\|_{B^{s_2}_{p_2, q_2}},
\end{align*}
where we used the assumption $s> -n/2$ and H\"older's inequality.

\medskip
\noindent
{\it Estimate for $A_{3, \ell}$}. 
We see that if $j < k_1-3$ and $k_1 >k_2+3$, then
\begin{equation*}
 \supp \F[T_{\sigma_{j, \K, \Nu}}(f_{1, k_1}, f_{2, k_2})]
 \subset
 \{2^{k_1-2} \le |\zeta| \le 2^{k_1+2}\}.
\end{equation*}
This implies that $\la T_{\sigma_{j, \K, \Nu}}(f_{1, k_1}, f_{2, k_2}), g_\ell  \ra = 0$ if $|k_1 -\ell| \ge 3$.
Furthermore, as mentioned above, 
if $\supp \varphi(\cdot -\nu_2) \cap \supp \psi_{k_2} = \emptyset$, 
then $\la T_{\sigma_{j, \K, \Nu}}(f_{1, k_1}, f_{2, k_2}), g_\ell  \ra = 0$.
Hence, $A_{3, \ell}$ can be written as
\begin{align*}
A_{3, \ell} 
= 
\sum_{\substack{ |k_1-\ell| \le 2 \\ k_2 <k_1-3}} 
\sum_{j < k_1-3} 
\sum_{\nu_1 \in \Z^n} 
\sum_{\nu_2 \in \Lambda_{k_2}}
\la T_{\sigma_{j, \K, \Nu}}(f_{1, k_1}, f_{2, k_2}), g_\ell \ra,
\end{align*}
where $\Lambda_{k_2}$ is the same as in \eqref{set-k2}.
By Lemma \ref{lem-Lp1Lp2Lr-est} (1) with $r = p^\prime$ and $N \ge 1$, 
\begin{align*}
 |A_{3, \ell}|
 &\lesssim
 \sum_{\substack{ |k_1-\ell| \le 2 \\ k_2 <k_1-3}} 
 \sum_{j < k_1-3} 
 2^{-(k_1-k_2)n/2-jN} \|f_{1, k_1}\|_{L^{p_1}} \|f_{2, k_2}\|_{L^{p_2}} \|g_\ell\|_{L^{p^\prime}}
 \\
 &\lesssim
 \sum_{\substack{ |k_1-\ell| \le 2 \\ k_2 <k_1-3}} 
 2^{-(k_1-k_2)n/2} \|f_{1, k_1}\|_{L^{p_1}} \|f_{2, k_2}\|_{L^{p_2}} \|g\|_{L^{p^\prime}}
 \\
 &=
 \sum_{|k| \le 2} 2^{kn/2} 
 \sum_{k_2 < \ell - k -3} 
 2^{-(\ell-k_2)n/2} \|f_{1, \ell- k}\|_{L^{p_1}} \|f_{2, k_2}\|_{L^{p_2}} \|g\|_{L^{p^\prime}}.
\end{align*} 
Hence, the left hand side of  \eqref{goal} concerning $A_{3, \ell}$ is estimated by  
\begin{equation*}
\sum_{|k| \le 2} 2^{k(s_1 + n/2)}
\bigg\{ 
\sum_{\ell \ge 0} 
2^{(\ell - k)s_1 q} \|f_{1, \ell- k}\|_{L^{p_1}}^q  
\Big( 
\sum_{k_2 < \ell - k -3} 
2^{\ell s_2- (\ell -k_2)n/2} \|f_{2, k_2}\|_{L^{p_2}} 
\Big)^q 
\bigg\}^{1/q}.
\end{equation*}
It is sufficient to consider the case $k=0$. 
Applying H\"older's inequality, we have
\begin{align} \label{A3-est}
\begin{split}
&\bigg\{ 
\sum_{\ell \ge 0} 
2^{\ell s_1 q} \|f_{1, \ell}\|_{L^{p_1}}^q  
\Big( 
\sum_{k_2 < \ell  -3} 
2^{\ell s_2 -(\ell - k_2)n/2} \|f_{2, k_2}\|_{L^{p_2}} 
\Big)^q 
\bigg\}^{1/q}
\\
&\le
\Big( 
\sum_{\ell \ge 0} 2^{\ell s_1 q_1} 
\|f_{1, \ell}\|_{L^{p_1}}^{q_1}
\Big)^{1/q_1} 
\bigg\{ 
\sum_{\ell \ge 0}  
\Big( 
\sum_{k_2 < \ell  -3} 
2^{\ell s_2 -(\ell - k_2)n/2} \|f_{2, k_2}\|_{L^{p_2}} \Big)^{q_2} 
\bigg\}^{1/q_2}
\\
&\lesssim
\|f_1\|_{B^{s_1}_{p_1, q_1}} 
\bigg\{ 
\sum_{\ell \ge 0} 
\Big( 
\sum_{k_2 < \ell  -3} 
2^{(\ell - k_2)(s_2-n/2)} 2^{k_2s_2}\|f_{2, k_2}\|_{L^{p_2}} 
\Big)^{q_2} 
\bigg\}^{1/q_2}.
\end{split}
\end{align}
If $q_2 \le 1$, then the last sum in \eqref{A3-est} can be estimated by 
\begin{align*}
 \Big( 
 \sum_{\ell \ge 0} 
 \sum_{k_2 < \ell -3} 
 2^{(\ell - k_2)(s_2-n/2)q_2} 
 2^{k_2 s_2 q_2}\|f_{2, k_2}\|_{L^{p_2}}^{q_2} \Big)^{1/q_2}
 &\approx 
  \Big( 
  \sum_{k_2 \ge 0} 
  2^{k_2 s_2 q_2} \|f_{2, k_2}\|_{L^{p_2}}^{q_2} 
  \Big)^{1/q_2}
  \\
 &\lesssim
  \|f_2\|_{B^{s_2}_{p_2, q_2}},
\end{align*} 
where we used the assumption $s_2 < n/2$.
On the other hand, if $q_2 > 1$, then the above sum is estimated by
\begin{align*}
 &\bigg\{ 
 \sum_{\ell \ge 0}  
 \Big( 
 \sum_{k_2 \ge 0} 
 2^{|\ell-k_2|(s_2-n/2)}  2^{k_2s_2}\|f_{2, k_2}\|_{L^{p_2}} 
 \Big)^{q_2} 
 \bigg\}^{1/q_2} 
 \\
 &\le 
  \Big(
  \sum_{\ell \ge 0} 
  2^{|\ell|(s_2-n/2)}
  \Big) 
 \Big( 
 \sum_{k_2 \ge 0}   
 2^{k_2s_2q_2} \|f_{2, k_2}\|_{L^{p_2}}^{q_2} 
 \Big)^{1/q_2}
 \lesssim
 \|f_2\|_{B^{s_2}_{p_2, q_2}},
\end{align*}  
where we used Young's inequality and the assumption $s_2 < n/2$. 
The proof of the assertion (1) is complete.
\end{proof}

\begin{proof}[Proof of the assertion (2).]
Next, we consider the assertion (2). 
We assume that 
 $m_1+m_2=-n/2$, $\sigma \in BS^{(m_1, m_2)}_{0, 0}$, 
$s_2 < m_2 +n/2$ and $s =s_1 +s_2 > -n/2$.
The idea of the proof is similar to the previous one.

\medskip
\noindent
{\it Estimate for $A_{1, \ell}$.}
In the same way as before, 
using the assumption $m_1+m_2 = -n/2$ and Lemma \ref{lem-Lp1Lp2Lr-est} (2) instead of (1), we obtain
\begin{align*}
|A_{1, \ell}| 
&\lesssim  
\sum_{j \ge \ell-6}
\sum_{\substack{k_1 \le j+3 \\ k_2 \le k_1}} 
2^{(k_1- k_2)m_1 -jN} \|f_{1, k_1}\|_{L^{p_1}} \|f_{2, k_2}\|_{L^{p_2}} \|g\|_{L^{p^\prime}}
\\
&\le
\sum_{j \ge \ell-6}
\sum_{k_1 \le j+3}
\sum_{k_2 \le k_1} 
2^{-k_1(s_1-m_1) + k_2(-m_1-s_2)-jN}
\|f_1\|_{B^{s_1}_{p_1, q_1}} \|f_2\|_{B^{s_2}_{p_2, q_2}} \|g\|_{L^{p^\prime}}.
\end{align*}
Since $s_2 < m_2 + n/2 =-m_1$ and $s = s_1 +s_2 > -n/2$, 
the desired estimate follows from the same argument as in the proof of the assertion (1).  

\medskip
\noindent
{\it Estimate for $A_{2, \ell}$.}
By Lemma \ref{lem-Lp1Lp2Lr-est} (2) and  the assumption $m_1+m_2 = -n/2$, 
\begin{align*} 
 |A_{2, \ell}|
 &\lesssim
 \sum_{\substack{k_1 \ge \ell-3 \\ k_1-3 \le k_2 \le k_1}}
 \sum_{j < k_1-3}  
 2^{k_1 m_1 +k_2 m_2 -jN + (j+\ell) n/2} \|f_{1, k_1}\|_{L^{p_1}} \|f_{2, k_2}\|_{L^{p_2}} \|g\|_{L^{p^\prime}}
 \\
 &\approx
 \sum_{0 \le k \le 3} 2^{-k m_2} 
 \sum_{k_1 \ge \ell -3} 
 2^{-(k_1-\ell)n/2} \|f_{1, k_1}\|_{L^{p_1}} \|f_{2, k_1 - k}\|_{L^{p_2}} \|g\|_{L^{p^\prime}}.
\end{align*}
Then, by using the assumption $s > -n/2$, 
we can obtain the desired estimate in the same way as for the assertion (1).

\medskip
\noindent
{\it Estimate for $A_{3, \ell}$.}
It follows from Lemma \ref{lem-Lp1Lp2Lr-est} (2) that
\begin{align*}
|A_{3, \ell}| 
&\lesssim
\sum_{\substack{ |k_1-\ell| \le 2 \\ k_2 <k_1-3}} 
 \sum_{j < k_1-3} 
 2^{k_1 m_1 + k_2m_2 -jN +k_2n/2} \|f_{1, k_1}\|_{L^{p_1}} \|f_{2, k_2}\|_{L^{p_2}} \|g\|_{L^{p^\prime}}
 \\
&\approx
\sum_{|k| \le 2} 
2^{-k m_1} 
\sum_{k_2< \ell-k-3} 
2^{(\ell-k_2)m_1} \|f_{1, \ell-k}\|_{L^{p_1}} \|f_{2, k_2}\|_{L^{p_2}} \|g\|_{L^{p^\prime}}.
\end{align*}
By the same argument as before, it is sufficient to consider the estimate 
\begin{equation*} 
\bigg\{ 
\sum_{\ell \ge 0} 
\Big( 
\sum_{k_2 < \ell -3} 
2^{(\ell -k_2)(s_2+ m_1)} 2^{k_2 s_2}\|f_{2, k_2}\|_{L^{p_2}} 
\Big)^{q_2}
\bigg\}^{1/q_2}
\lesssim 
\|f_2\|_{B^{s_2}_{p_2, q_2}}, 
\end{equation*}
and we see that this estimate holds since $s_2 < -m_1 = m_2 + n/2$.
The proof of Theorem \ref{main1} is complete.
\end{proof}

\section{Proof of Theorem \ref{main2}} \label{section5}
The purpose of this section is to prove Theorem \ref{main2}. 
To obtain Theorem \ref{main2}, it is sufficient to consider the following proposition.

\begin{prop} \label{prop-est-Sobolev}
Let
$1 \le p, p_1, p_2, \widetilde{p}_1, \widetilde{p}_2 \le \infty$ and
 $s, s_1, s_2, \widetilde{s}_1, \widetilde{s}_2 \in \R$.
\begin{enumerate}
\item
Suppose that  the estimate
\begin{equation} \label{est-Sobolev}
\|T_{\sigma}(f_1, f_2)\|_{L^p_s} 
\lesssim 
\|f_1\|_{L^{p_1}_{s_1}}\|f_2\|_{L^{p_2}_{s_2}}
+
\|f_1\|_{L^{\widetilde{p}_1}_{\widetilde{s}_1}}\|f_2\|_{L^{\widetilde{p}_2}_{\widetilde{s}_2}}
\end{equation}
holds for all $\sigma \in BS^{-n/2}_{0, 0}$ and $f_1, f_2 \in \Sh(\R^n)$.
Then, 
$s - \max \{s_i, \widetilde{s}_i \} \le n/2$, $i= 1, 2$, 
and 
$\max \{ s_1 +s_2,\ \widetilde{s}_1 +\widetilde{s}_2 \} \ge -n/2$.
\item
Let $m_1, m_2 \in \R$ be such that $m_1 +m_2 = -n/2$.
Suppose that the estimate \eqref{est-Sobolev} 
holds for all $\sigma \in BS^{(m_1, m_2)}_{0, 0}$ and $f_1, f_2 \in \Sh(\R^n)$. 
Then, 
$s - \max \{ s_i, \widetilde{s}_i \} \le -m_i $, $i=1, 2$, 
and 
$\max \{ s_1 +s_2,\ \widetilde{s}_1 +\widetilde{s}_2 \} \ge -n/2$.
\end{enumerate}
\end{prop}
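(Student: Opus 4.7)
The plan is to prove each necessary condition in Proposition~\ref{prop-est-Sobolev} by exhibiting an explicit counterexample to \eqref{est-Sobolev}. The counterexamples use a fixed model symbol in the relevant $S_{0,0}$-class together with test functions of the form $f_i(x) = e^{i\varepsilon_i N x\cdot e_1}\phi(x)$, where $\phi\in\Sh(\R^n)$ is a fixed Schwartz bump with $\widehat{\phi}$ supported in a small neighbourhood of the origin and $\phi \not\equiv 0$, $\varepsilon_i \in \{0,\pm 1\}$, and $N$ is a large parameter. Both sides of \eqref{est-Sobolev} are then asymptotically powers of $N$, and comparison of exponents as $N\to\infty$ forces the required constraint on the Sobolev indices.

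For assertion (1), I would take $\sigma_0(\xi_1,\xi_2) = (1+|\xi_1|^2+|\xi_2|^2)^{-n/4}$, which clearly lies in $BS^{-n/2}_{0,0}$. To force $\max\{s_1+s_2,\widetilde{s}_1+\widetilde{s}_2\}\ge -n/2$, I choose $\varepsilon_1 = +1$, $\varepsilon_2 = -1$, so that $\widehat{f_i}$ is supported near $\varepsilon_i N e_1$; the substitution $\xi_i = \varepsilon_i Ne_1+\eta_i$ in the defining integral shows $T_{\sigma_0}(f_1,f_2)(x) = N^{-n/2} G_N(x)$ with $G_N$ converging in $L^p$ to a nonzero constant multiple of $\phi^2$, hence $\|T_{\sigma_0}(f_1,f_2)\|_{L^p_s}\approx N^{-n/2}$; an analogous substitution in $(I-\Delta)^{s_i/2}f_i$ gives $\|f_i\|_{L^{p_i}_{s_i}}\approx N^{s_i}$ and $\|f_i\|_{L^{\widetilde{p}_i}_{\widetilde{s}_i}}\approx N^{\widetilde{s}_i}$, so that \eqref{est-Sobolev} reduces to $N^{-n/2}\lesssim N^{s_1+s_2}+N^{\widetilde{s}_1+\widetilde{s}_2}$. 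For $s-\max\{s_1,\widetilde{s}_1\}\le n/2$ I instead take $\varepsilon_1 = +1$, $\varepsilon_2 = 0$; the same substitution gives $T_{\sigma_0}(f_1,f_2)(x) = N^{-n/2}e^{iNx\cdot e_1}\widetilde{G}_N(x)$, whose Fourier transform is localised near $Ne_1$, so $\|T_{\sigma_0}(f_1,f_2)\|_{L^p_s}\approx N^{s-n/2}$, while the right-hand side is of order $N^{s_1}+N^{\widetilde{s}_1}$. The case $i=2$ is symmetric.

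Assertion (2) is handled identically after replacing the model symbol by $\sigma_0(\xi_1,\xi_2) = (1+|\xi_1|^2)^{m_1/2}(1+|\xi_2|^2)^{m_2/2}\in BS^{(m_1,m_2)}_{0,0}$. For $\varepsilon_1=+1$, $\varepsilon_2=-1$ the symbol is $\approx N^{m_1+m_2} = N^{-n/2}$ on the effective support, reproducing $s\ge -n/2$; for $\varepsilon_1 = +1$, $\varepsilon_2 = 0$ it is $\approx N^{m_1}$, so $\|T_{\sigma_0}(f_1,f_2)\|_{L^p_s}\approx N^{s + m_1}$ and comparison with $N^{s_1}+N^{\widetilde{s}_1}$ yields $s-\max\{s_1,\widetilde{s}_1\}\le -m_1$. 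The case $i=2$ is again symmetric.

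The main technical obstacle is justifying the asymptotic equivalences $\approx$ rigorously, that is, producing matching lower bounds rather than only upper bounds. The clean approach is to observe that, after factoring out the appropriate power of $N$, both the rescaled symbol and the rescaled Bessel weight $(1+|Ne_1+\eta|^2)^{s/2}N^{-s}$ converge in $C^\infty$ on compact sets to a positive constant as $N\to\infty$; combined with the rapid decay of $\widehat{\phi}$ and dominated convergence, this yields $L^p$-convergence (in fact uniform convergence, which handles the case $p_i=\infty$) of the underlying Fourier integrals to nontrivial limits proportional to $\phi$ or $\phi^2$, delivering the required lower bounds.
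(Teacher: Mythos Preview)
Your proposal is correct and follows the same overall strategy as the paper: test the inequality \eqref{est-Sobolev} on Schwartz functions whose Fourier transforms are bumps centred at $\pm Ne_1$ (or at the origin), compute the growth in $N$ of both sides, and compare exponents. The technical implementation, however, is genuinely different. The paper chooses Littlewood--Paley-type symbols such as $\sigma(\xi_1,\xi_2)=\sum_{k\ge 10}2^{-kn/2}\varphi(2^{-k}\xi_1)\psi(2^{-k}\xi_2)$ (and tensor-product analogues for part~(2)), together with test functions $\widehat{f_{i,j}}=\phi(\cdot\mp 2^{j}e_1)$. Because the $\psi(2^{-k}\cdot)$ have disjoint supports, exactly one term of the sum survives and $T_\sigma(f_{1,j},f_{2,j})$ is computed \emph{exactly} as $2^{-jn/2}(\F^{-1}\phi)^2$ or $2^{-jn/2}e^{i2^{j}x\cdot e_1}(\F^{-1}\phi)^2$; the Sobolev norms are then read off from the Fourier support with no limiting argument needed. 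Your choice of the smooth prototypical symbols $(1+|\xi_1|^2+|\xi_2|^2)^{-n/4}$ and $(1+|\xi_1|^2)^{m_1/2}(1+|\xi_2|^2)^{m_2/2}$ is arguably more natural, but the price is that you must justify the asymptotics $N^{n/2}T_{\sigma_0}(f_1,f_2)\to c\,\phi^2$ in $L^p_s$ via dominated convergence and then extract the lower bound from the nontriviality of the limit. Both routes are short; the paper's gives cleaner exact identities, while yours avoids building auxiliary Littlewood--Paley cutoffs and makes the role of the order $m$ (or $m_1,m_2$) more transparent.
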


We shall show that Theorem \ref{main2} follows from Proposition \ref{prop-est-Sobolev}.
Let the exponents of function spaces be the same as in \eqref{pqs-conditions}, 
and let $\sigma \in BS^{-n/2}_{0, 0}$.
Since 
$B^s_{p, q} \hookrightarrow L^p_{s-\epsilon}$ and $ L^{p_i}_{s_i + \epsilon}\hookrightarrow B^{s_i}_{p_i, q_i}$, 
$i=1, 2$, for any $\epsilon >0$, it follows from \eqref{main-est-1} that
\begin{equation*}
\|T_{\sigma}(f_1, f_2)\|_{L^p_{s-\epsilon}}
\lesssim
\|f_1\|_{L^{p_1}_{s_1+\epsilon}}
\|f_2\|_{L^{p_2}_{s_2+ \epsilon}}
+
\|f_1\|_{L^{\widetilde{p}_1}_{\widetilde{s}_1+ \epsilon}}
\|f_2\|_{L^{\widetilde{p}_2}_{\widetilde{s}_2 + \epsilon}}.
\end{equation*}
Therefore, by Proposition \ref{prop-est-Sobolev} (1) 
and the assumption $s=s_1+s_2=\widetilde{s}_1+\widetilde{s}_2$, 
we have 
$s - \max \{ s_i, \widetilde{s}_i\} -2\epsilon \le n/2$ 
and 
$\max\{ s_1+s_2,\ \widetilde{s}_1 + \widetilde{s}_2\} + 2\epsilon \ge -n/2$,
namely, $\min \{ s_i, \widetilde{s}_i\} \le n/2 +2\epsilon$, $i= 1, 2$, and $s +2\epsilon \ge -n/2$.
The arbitrariness of $\epsilon > 0$ implies that 
$\min \{ s_i, \widetilde{s}_i\} \le n/2$, $i= 1, 2$, and $s \ge -n/2$.
Similarly, the assertion (2) of Theorem \ref{main2} follows from that of Proposition \ref{prop-est-Sobolev}.

\medskip
\noindent
{\it Proof of Proposition \ref{prop-est-Sobolev}.}
\ \ 
Let $\phi, \varphi, \psi \in \Sh(\R^n)$ be such that
\begin{align*}
&\supp \phi \subset \{|\xi| \le 2^{1/2}\},
\quad
\supp \varphi \subset \{ |\xi| \le 2\},
\quad
\varphi = 1\ \text{on} \ \{|\xi| \le 2^{1/2}\},
\\
&\supp \psi \subset \{2^{-1/2} \le |\xi| \le 2^{1/2}\},
\quad
\psi = 1 \ \text{on} \ \{2^{-1/4} \le |\xi| \le 2^{1/4}\}.
\end{align*}

Now, we start to prove the assertion (1) in Proposition \ref{prop-est-Sobolev}.
First, we consider the necessity of the condition $\max \{ s_1+s_2,\ \widetilde{s}_1+ \widetilde{s}_2\} \ge -n/2$.
We set 
\begin{align} \label{def-symbol}
\begin{split}
&\sigma(\xi_1, \xi_2) 
= \sum_{k \ge 10} 2^{-kn/2} \varphi(2^{-k}\xi_1) \psi(2^{-k}\xi_2),
\\
&\widehat{f_{1, j}}(\xi_1) = \phi(\xi_1 + 2^{j}e_1), 
\quad
\widehat{f_{2, j}} (\xi_2) = \phi(\xi_2 - 2^j e_1), 
\end{split}
\end{align}
where $j \ge 10$ and $e_1 = (1, 0, \dots, 0) \in \R^n$. 
Since $1+ |\xi_1|+ |\xi_2| \approx 2^k$ 
for $\xi_1 \in \supp \varphi(2^{-k}\cdot)$ and $\xi_2 \in \supp \psi(2^{-k} \cdot)$, 
we see that $\sigma \in BS^{-n/2}_{0, 0}$.
It follows from the support property of $\phi$  that
\begin{equation*}
\supp\phi(\cdot + 2^j e_1)
\subset
\{|\xi + 2^{j}e_1| \le 2^{1/2}\}
\subset
\{2^{j-1/4} \le|\xi| \le 2^{j+1/4}\}.
\end{equation*}
From this and a simple embedding $B^s_{p, 1} \hookrightarrow L^p_s \hookrightarrow B^s_{p, \infty}$ 
(see \cite[Theorem 2.3.8 and Proposition 2.5.7]{Triebel}), we have 
$2^{js_1}
\approx 
\|f_{1, j}\|_{B^{s_1}_{p_1, \infty}}
\le
\|f_{1, j}\|_{L^{p_1}_{s_1}}
\le
\|f_{1, j}\|_{B^{s_1}_{p_1, 1}}
\approx 2^{js_1}, 
$
that is, $\|f_{1, j}\|_{L^{p_1}_{s_1}} \approx 2^{js_1}$. Similarly, we have $\|f_{2, j}\|_{L^{p_2}_{s_2}} \approx 2^{js_2}$.
Moreover, we see that $\varphi(2^{-k}\xi_1)\psi(2^{-k}\xi_2)\phi(\xi_1+ 2^j e_1)\phi(\xi_2 - 2^j e_1)$
 is equal to $\phi(\xi_1+ 2^j e_1)\phi(\xi_2 - 2^j e_1)$ if $k = j$, and 0 otherwise.
Hence, we have
\begin{equation} \label{T-f1j-f2j}
 T_\sigma(f_{1, j}, f_{2, j})(x) 
 =2^{-jn/2} (\F^{-1}\phi(x))^2,
\end{equation}
and consequently $\|T_\sigma(f_{1, j}, f_{2, j})\|_{L^p_s} \approx 2^{-jn/2}$.  
Thus, by our assumption, 
\begin{align*}
 2^{-jn/2}
 \approx
 \|T_\sigma(f_{1, j}, f_{2, j})\|_{L^p_s}
 &\lesssim
 \|f_{1, j}\|_{L^{p_1}_{s_1}}\|f_{2, j}\|_{L^{p_2}_{s_2}}
 +
 \|f_{1, j}\|_{L^{\widetilde{p}_1}_{\widetilde{s}_1}}\|f_{2, j}\|_{L^{\widetilde{p}_2}_{\widetilde{s}_2}}
 \\
 &\approx
 2^{j(s_1+s_2)} + 2^{j(\widetilde{s}_1 + \widetilde{s}_2)}
 \lesssim 
 2^{j\max\{s_1+s_2,\ \widetilde{s}_1 + \widetilde{s}_2 \}}.
\end{align*}
The arbitrariness of $j \ge 10$ implies that $\max\{s_1+s_2,\ \widetilde{s}_1 + \widetilde{s}_2\} \ge -n/2$.

Next, we consider the necessity of the condition $s -\max\{s_2, \widetilde{s}_2 \} \le n/2$. 
We use the same symbol $\sigma$ as in \eqref{def-symbol} and the following functions
\begin{equation} \label{two-functions}
\widehat{f_1}(\xi_1) = \phi(\xi_1),
\quad
\widehat{f_{2, j}}(\xi_2) = \phi(\xi_2 -2^j e_1).
\end{equation}
As mentioned above, 
$\|f_{2, j}\|_{L^{p_2}_{s_2}} \approx 2^{j s_2}$, 
and obviously $\|f_1\|_{L^{p_1}_{s_1}} \approx 1$.
It follows from the same observation as above that  
$ \varphi(2^{-k}\xi_1) \psi(2^{-k}\xi_2)\phi(\xi_1)\phi(\xi_2 - 2^j e_1)$ 
equals $\phi(\xi_1)\phi(\xi_2 - 2^j e_1)$ if $k = j$, and 0 otherwise. 
Hence, we have
\begin{equation*}
T_\sigma (f_1, f_{2, j})(x) = 2^{-j n/2} e^{i 2^j x \cdot e_1} (\F^{-1}\phi(x))^2.
\end{equation*}
Then, 
$\supp \F[T_{\sigma}(f_1, f_{2, j})] 
\subset 
\supp [\phi * \phi](\cdot - 2^j e_1)
\subset
\{2^{j-1/4} \le |\xi| \le 2^{j+1/4}\}$ 
for $j \ge 10$ and consequently $\|T_{\sigma}(f_1, f_{2, j})\|_{L^p_s}  \approx 2^{j(s-n/2)}$.
Therefore, we have
\begin{align*}
2^{j(s-n/2)} 
&\approx
\|T_{\sigma}(f_1, f_{2, j})\|_{L^p_s}
\lesssim
\|f_1\|_{L^{p_1}_{s_1}}\|f_{2, j}\|_{L^{p_2}_{s_2}}
+
\|f_1\|_{L^{\widetilde{p}_1}_{\widetilde{s}_1}}\|f_{2, j}\|_{L^{\widetilde{p}_2}_{\widetilde{s}_2}}
\lesssim 2^{j \max\{s_2, \widetilde{s}_2 \}},
\end{align*}
which holds only when  $s -\max\{s_2, \widetilde{s}_2 \} \le n/2$.
By interchanging the roles of $\xi_1$ and $\xi_2$, we get $s- \max\{s_1, \widetilde{s}_1 \} \le n/2$.

Next, we show the assertion (2). 
Let $m_1, m_2 \in \R$ and $m_1+ m_2 = -n/2$.
We first consider the proof of the assertion $\max\{s_1+s_2, \widetilde{s}_1 + \widetilde{s}_2 \} \ge -n/2$.
Set 
\begin{equation*}
\sigma(\xi_1, \xi_2) 
=
\Big(\sum_{k_1 \ge 10} 2^{m_1 k_1}\psi(2^{-k_1} \xi_1) \Big)
\Big(\sum_{k_2 \ge 10} 2^{m_2k_2}\psi(2^{-k_2} \xi_2) \Big).
\end{equation*}
It is not difficult to check that $\sigma \in BS^{(m_1, m_2)}_{0, 0}$
since $|\xi_i| \approx 2^{k_i}$ if $\xi_i \in \supp \psi (2^{-k_i} \cdot)$, 
$i=1, 2$. 
Using this symbol and the functions in \eqref{def-symbol}, 
we obtain the same representation as in \eqref{T-f1j-f2j} 
since $\psi(2^{-k_1} \xi_1)\psi(2^{-k_2} \xi_2) \phi(\xi_1 + 2^j e_1) \phi(\xi_2 -2^j e_1)$
is equal to
$\phi(\xi_1 + 2^j e_1) \phi(\xi_2 -2^j e_1)$ 
if $k_1=k_2=j$, and 0 otherwise. The rest of the proof is the same as for the case $\sigma \in BS^{-n/2}_{0, 0}$.

We next prove that  $s- \max \{s_i, \widetilde{s}_i\} \le -m_i$, $i= 1, 2$.
We use  the functions in \eqref{two-functions} and the following symbol
\begin{equation*}
\sigma(\xi_1, \xi_2) 
=
\varphi(\xi_1)
\Big(\sum_{k_2 \ge 10} 2^{m_2k_2}\psi(2^{-k_2} \xi_2) \Big).
\end{equation*}
Note that $\sigma \in BS^{(m_1, m_2)}_{0, 0}$.
Since 
$\varphi(\xi_1)\psi(2^{-k_2} \xi_2)\phi(\xi_1)\phi(\xi_2 -2^j e_1)$ 
equals 
$\phi(\xi_1)\phi(\xi_2 -2^j e_1)$ 
if $k_2=j$, and 0 otherwise, we have 
\begin{equation*}
T_{\sigma}(f_1, f_{2, j})(x)
= 2^{j m_2}e^{i 2^j x \cdot e_1}  ( \F^{-1}\phi(x) )^2, 
\end{equation*}
and hence 
$
\|T_{\sigma}(f_1, f_{2, j})\|_{L^p_s}  \approx 2^{j(s+m_2)}
$.
Therefore, by the same argument as above,  
we conclude that $s- \max \{s_2, \widetilde{s}_2\} \le -m_2$.
Applying the argument with the variables $\xi_1$ and $\xi_2$ interchanged, 
we obtain $s - \max \{s_1, \widetilde{s}_1 \} \le -m_1$.
The proof  is complete.

\begin{rem} \label{rem-Sobolev}
Using the same functions as in \eqref{two-functions},
we can prove that \eqref{Kato-Ponce-modoki} holds only if  $s \le \max\{s_2, \widetilde{s}_2 \}$, 
namely, $\min \{s_1, \widetilde{s}_1\} \le 0$. 
Similarly, 
$\min\{s_2, \widetilde{s}_2 \} \le$ $0$
holds.
\end{rem}
\section*{Acknowledgement}
The author would like to thank Professor Naohito Tomita 
for a lot of helpful suggestions and warm encouragement.



\begin{thebibliography}{99}
\bibitem{BBMNT}
\'A. B\'enyi, F. Bernicot, D. Maldonado, V. Naibo and R. H. Torres,
{On the H\"ormander classes of bilinear pseudodifferential operators II},
Indiana Univ. Math. J. 62 (2013), 1733--1764.

\bibitem{BT-1}
\'A. B\'enyi and R. H. Torres,
{Symbolic calculus and the transposes of bilinear pseudodifferential operators}, 
Comm. Partial Differential Equations 28 (2003), 1161--1181.

\bibitem{BT-2}
\'A. B\'enyi and R. H. Torres,
{Almost orthogonality and a class of bounded bilinear pseudodifferential operators},
Math. Res. Lett. 11 (2004), 1--11.

\bibitem{CV}
A. P. Calder\'on and R. Vaillancourt,
{A class of bounded pseudo-differential operators},
Proc. Nat. Acad. Sci. U.S.A. 69 (1972), 1185--1187.

\bibitem{GO}
L. Grafakos and S. Oh,
{The Kato-Ponce inequality},
Comm. Partial Differential Equations 39 (2014), 1128--1157. 

\bibitem{HST}
N. Hamada, N. Shida and N. Tomita, 
{On the ranges of bilinear pseudo-differential operators of $S_{0, 0}$-type on $L^2 \times L^2$},
to appear in J. Funct. Anal.,
available on arXiv:2010.13280.

\bibitem{Kato}
T. Kato,
{Bilinear pseudo-differential operators with exotic class symbols of limited smoothness},
available on arXiv:2001.04648.

\bibitem{KMT}
T. Kato, A. Miyachi and N. Tomita, 
{Boundedness of bilinear pseudo-differential operators of $S_{0,0}$-type on $L^2 \times L^2$},
available on arXiv:1901.07237.

\bibitem{KMT-2}
T. Kato, A. Miyachi and N. Tomita,
{Boundedness of multilinear pseudo-differential operators 
of $S_{0,0}$-type in $L^2$-based amalgam spaces},
to appear in J. Math. Soc. Japan,
available on arXiv:1908.11641.

\bibitem{KP}
T. Kato and G. Ponce,
{Commutator estimates and the Euler and Navier-Stokes equations},
Comm. Pure Appl. Math. 41 (1988), 891--907.

\bibitem{KT}
K. Koezuka and N. Tomita,
{Bilinear pseudo-differential operators with symbols
in $BS^{m}_{1, 1}$ on Triebel-Lizorkin spaces},
J. Fourier Anal. Appl. 24 (2018), 309--319. 

\bibitem{MRS}
N. Michalowski, D. Rule and W. Staubach,
{Multilinear pseudodifferential operators beyond
Calder\'on-Zygmund theory},
J. Math. Anal. Appl. 414 (2014), 149--165.

\bibitem{MS}
C. Muscalu and W. Schlag,
{Classical and Multilinear Harmonic Analysis, vol. I\hspace{-.1em}I.}
Cambridge University Press, Cambridge, 2013.

\bibitem{MT-IUMJ}
A. Miyachi and N. Tomita,
{Calder\'on-Vaillancourt-type theorem for bilinear operators},
Indiana Univ. Math. J. 62 (2013), 1165--1201.

\bibitem{MT}
A. Miyachi and N. Tomita, 
{Bilinear pseudo-differential operators with exotic symbols}, 
to appear in Ann. Inst. Fourier (Grenoble), 
available on arXiv:1801.06744.

\bibitem{NT}
V. Naibo and A. Thomson,
{Bilinear H\"ormander classes of critical order and Leibniz-type rules in Besov and local Hardy spaces},
J. Math. Anal. Appl. 473 (2019), 980--1001.

\bibitem{Sugimoto}
M. Sugimoto,
{Pseudo-differential operators on Besov spaces}, 
Tsukuba J. Math. 12 (1988), 43--63.

\bibitem{Triebel}
H. Triebel,
{Theory of Function Spaces},
Birkh\"auser Verlag,
Basel, 1983.
\end{thebibliography}
\end{document}